\newtheorem{proposition}{Proposition}[section]
  \newtheorem{theorem}[proposition]{Theorem}
  \newtheorem{corollary}[proposition]{Corollary}
\theoremstyle{remark}
  \newtheorem{definition}[proposition]{Definition}
  \newtheorem{remark}[proposition]{Remark}
   \newtheorem{question}[proposition]{Question}
\newcommand{\cst}{\ifmmode\mathrm{C}^*\else{$\mathrm{C}^*$}\fi}
\newcommand{\st}{\;\vline\;}
\newcommand{\CC}{\mathbb{C}}
\newcommand{\NN}{\mathbb{N}}
\newcommand{\tens}{\otimes}
\newcommand{\atens}{\otimes_{\text{\tiny{alg}}}} 
\newcommand{\id}{\mathrm{id}}
\newcommand{\comp}{\!\circ\!}
\newcommand{\I}{\mathds{1}}
\newcommand{\vt}{\!\vartriangle\!}
\newcommand{\GG}{\mathbb{G}}
\newcommand{\HH}{\mathbb{H}}
\newcommand{\II}{\mathbb{I}}
\newcommand{\SSS}{\mathbb{S}}
\newcommand{\KK}{\mathbb{K}}
\newcommand{\sA}{\mathsf{A}}
\newcommand{\sB}{\mathsf{B}}
\newcommand{\sM}{\mathsf{M}}
\newcommand{\sC}{\mathsf{C}}
\newcommand{\sJ}{\mathsf{J}}
\newcommand{\sS}{\mathsf{S}}
\newcommand{\cT}{\mathcal{T}}
\newcommand{\is}[2]{\left(#1\,\vline\,#2\right)}
\newcommand{\bb}{\boldsymbol{b}}
\newcommand{\cDelta}{\overset{\circ}{\Delta}}
\newcommand{\sD}{\mathsf{D}}
\newcommand{\sK}{\mathsf{K}}
\renewcommand{\Bar}[1]{\overline{#1}}
\DeclareMathOperator{\C}{C}
\DeclareMathOperator{\M}{M}
\DeclareMathOperator{\Mor}{Mor}
\DeclareMathOperator{\Aut}{Aut}
\DeclareMathOperator{\Pol}{Pol}
\newenvironment{rlist}
{

\begin{enumerate}}
{\end{enumerate}}
\numberwithin{equation}{section}
\title{Quantum families of invertible maps and related problems}
\author{Adam Skalski}
\address{Institute of Mathematics of the Polish Academy of Sciences,
ul.~\'Sniadeckich 8, 00--956 Warszawa, Poland
\newline \indent Faculty of Mathematics, Informatics and Mechanics, University of Warsaw, ul.~Banacha 2,
02-097 Warsaw, Poland}
\thanks{AS  was partially supported by the NCN (National Centre of Science) grant
2014/14/E/ST1/00525}
\email{a.skalski@impan.pl}
\author{Piotr M.~So{\l}tan} \address{Department of Mathematical Methods in Physics, Faculty of Physics, University of Warsaw, Poland}
\email{piotr.soltan@fuw.edu.pl}
\keywords{Quantum families of invertible maps, Hopf image, universal quantum group}
\subjclass[2010]{ Primary 46L89, Secondary 46L65}
\begin{document}

\begin{abstract}
The notion of families of quantum invertible maps (\cst-algebra homomorphisms satisfying Podle\'s condition) is employed to strengthen and reinterpret several results concerning universal quantum groups acting on finite quantum spaces. In particular Wang's quantum automorphism groups are shown to be universal with respect to quantum families of invertible maps. Further the construction of the Hopf image of Banica and Bichon is phrased in the purely analytic language and employed to define the quantum subgroup generated by a family of quantum subgroups or more generally a family of quantum invertible maps.
\end{abstract}

\maketitle

\section{Introduction}

A theorem of Gelfand and Naimark, identifying the category of compact spaces (with continuous maps) and unital commutative \cst-algebras (with unital $*$-homomorphisms) has become an entry point for a very rich host of noncommutative generalizations and interpretations. The idea that general \cst-algebras should be viewed as `quantum' topological spaces was also behind the theory of compact, and later locally compact, quantum groups. As in the classical world, quantum groups are best seen `in action', i.e.\ as families of symmetries of a given space. The concept of `quantum families of maps' appeared already in late 1970s (\cite{pseu}): compact quantum groups were born in the second half of the 1980s, and the study of their actions on \cst-algebras  started soon after that (\cite{podPhd}). Finally in 1998 Shuzhou Wang proved in \cite{Wang} the existence of the universal compact quantum group acting on a finite-dimensional \cst-algebra $\sA$ with a fixed faithful state $\omega$. The objects introduced by Wang have proved to be fascinating and useful for several reasons: in the purely algebraic sense, as a source of new examples of Hopf $*$-algebras, analytically, yielding  \cst-algebras of highly non-trivial structure (\cite{ChristianKtheory}, \cite{MikeHAP}) and as an arena for classical and noncommutative geometry and probability, with several, often unexpected, connections to other areas of mathematics (see for example \cite{Hadamard}).

We begin this article by revisiting the original idea of Wang, connecting it to the notion of quantum families of \emph{invertible maps}. The latter are defined via the non-degeneracy type condition introduced by Podle\'s in \cite{podPhd} (see also [So$_{1-2}$]). We give an alternative proof of the main result of \cite{Wang}, showing at the same time  that the algebras constructed there are in fact universal with respect to all quantum families of invertible maps acting on a given $(\sA, \omega)$ (and not only for compact quantum group actions). Then we pass to the study of \emph{Hopf images}. The notion of a Hopf image $A_{\Lambda}$ for an algebra homomorphism $\Lambda$ from a Hopf algebra $A$ into another algebra, understood as the largest Hopf algebra through which $\Lambda$ factorises as an algebra homomorphism, was introduced by Banica and Bichon in \cite{BanicaBichon} and later studied from the compact quantum group point of view for example in \cite{HopfIdempotents}. Here (in Theorem 4.1) we provide a purely analytic approach to this concept in the compact quantum group setting. This allows us to further formalise the concept of the quantum group generated by a family of quantum subgroups (recently introduced also in \cite{BCV}), and by a given quantum family of invertible maps. This brings us back to the first part of the paper, at the same time offering an alternative point of view on \emph{inner linearity} of a compact quantum group $\GG$: this important notion originally introduced in \cite{BanicaBichon}, dual to the \emph{linearity} for discrete groups, is equivalent to the fact that $\GG$ is generated by a finite quantum family of invertible maps.

The plan of the article is as follows: in Section 2 we introduce the terminology of quantum spaces and maps and recall some known results. Section 3 presents quantum families of invertible maps and revisits the construction of the universal compact quantum group acting on a finite-dimensional \cst-algebra $\sA$ with a fixed state, due to Wang. In particular we show there that Wang's algebra is universal with respect to quantum families of invertible maps on $\sA$ fixing the relevant state. Section 4 passes to the study of Hopf images, phrasing the concept in the purely analytic language. Section 5 explains how to use the notion of the Hopf image to define the quantum group generated by a family of quantum subgroups, and Section 6 contains the construction of the quantum group generated by a quantum family of invertible maps, leading to a new interpretation of inner linearity. Finally in Section 7 we discuss briefly a generation question concerning the quantum increasing sequences of Curran, and in the Appendix present an alternative, simpler proof of Theorem 2.3, originally showed in \cite{qs} by other methods.

\textbf{Acknowledgment:} We thank the anonymous referee for a very careful reading of our manuscript.

\section{Quantum spaces and quantum families of maps}

\newcommand{\XX}{\mathbb{X}}
\newcommand{\YY}{\mathbb{Y}}
\newcommand{\EE}{\mathbb{E}}
\newcommand{\UU}{\mathbb{U}}
\newcommand{\FF}{\mathbb{F}}
\newcommand{\DD}{\mathbb{D}}

Consider the  \emph{category of \cst-algebras}, i.e.~the category whose objects are all \cst-algebras and for any two \cst-algebras  $\sA$ and $\sB$ the set of $\Mor(\sA,\sB)$ of morphisms from $\sA$ to $\sB$ is defined to be the set of all $*$-homomorphisms $\Phi$ from $\sA$ to the multiplier algebra $\M(\sB)$ such that $\Phi(\sA)\sB$ is dense in $\sB$ (\cite{pseu,gen}). The particular choice of morphisms ensures that the full subcategory formed by commutative \cst-algebras is dual to the category of locally compact (Hausdorff) spaces with continuous maps as morphisms. The duality is realized by the functor mapping a locally compact space $X$ to the \cst-algebra $\C_0(X)$ of all continuous functions on $X$ vanishing at infinity.

\begin{definition}
A \emph{quantum space} is an object of the category dual to the category of \cst-algebras.
\end{definition}

Strictly speaking any theorem about quantum spaces is nothing else but a theorem about \cst-algebras. Nevertheless the study of \cst-algebras ``as if they were algebras of functions'' has lead to many exciting developments. In particular the theory of compact and locally compact quantum groups originated through this approach to the theory of \cst-algebras.

\sloppy
The standard notation used in studying quantum spaces is the following: for a quantum space $\XX$ we write $\C_0(\XX)$ for the corresponding \cst-algebra. A \emph{map of quantum spaces} from a quantum space $\XX$ to another quantum space $\YY$ is then an element of the set $\Mor\bigl(\C_0(\YY),\C_0(\XX)\bigr)$. In this paper almost all quantum spaces will be \emph{compact}, i.e.~their corresponding \cst-algebras will be unital. It is only natural to write then $\C(\XX)$, $\C(\YY)$ etc.~to denote the corresponding \cst-algebras. Also morphisms form $\XX$ to $\YY$ are then simply unital $*$-homomorphism form $\C(\YY)$ to $\C(\XX)$. The symbol $\tens$ will denote the minimal/spatial tensor product of \cst-algebras, and the algebraic tensor product will be denoted by $\tens_{\textup{alg}}$.

Motivated by the classical characterization of continuous families of maps between topological spaces the following definition was introduced in \cite{pseu} (cf.~also \cite[Section 3]{qs}).

\begin{definition}\label{DefQFM}
Let $\XX$, $\YY$ and $\UU$ be quantum spaces. A \emph{quantum family of maps} from $\XX$ to $\YY$ indexed by $\UU$ is an element of $\Mor\bigl(\C_0(\YY),\C_0(\XX)\tens\C_0(\UU)\bigr)$.
\end{definition}

Definition \ref{DefQFM} is very broad and hence not much can be said about all quantum families of maps. However with appropriate notion of quantum families of \emph{invertible} maps (Definition \ref{DefInv}) we will show in Section \ref{Sgen} that they generate (in an appropriate sense) actions of quantum groups.

Once the definition of a quantum family of maps has been formulated, the question of existence of quantum families of maps with specific properties of universal nature becomes particularly important. An important theorem announced in \cite{pseu} states that certain universal quantum families always exist.

\begin{theorem}\label{existenceQFM}
Let $\XX$ and $\YY$ be quantum spaces such that $\C(\XX)$ is finite dimensional and $\C(\YY)$ is finitely generated and unital. Then there exists a compact quantum space $\EE$ and a quantum family of maps $\XX\to\YY$ indexed by $\EE$
\[
\Phi\colon\C(\YY)\to\C(\XX)\tens\C(\EE)
\]
such that for any quantum space $\UU$ and any quantum family of maps $\Psi\in\Mor\bigl(\C(\YY),\C(\XX)\tens\C_0(\UU)\bigr)$ there exists a unique $\Lambda\in\Mor\bigl(\C(\EE),\C_0(\UU)\bigr)$ such that $\Psi=(\id\tens\Lambda)\comp\Phi$ or, in  other words, the diagram
\[
\xymatrix
{
\C(\YY)\ar@{=}[d]\ar[rr]^{\Phi}&&\C(\YY)\tens\C(\EE)\ar[d]^{\id\tens\Lambda}\\
\C(\YY)\ar[rr]^\Psi&&\C(\YY)\tens\C(\UU)
}
\]
is commutative.

Moreover the unital \cst-algebra $\C(\EE)$ is  finitely generated.
\end{theorem}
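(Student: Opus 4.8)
The plan is to realise $\C(\EE)$ as a concrete universal object representing the functor sending a unital \cst-algebra $R$ to the set of unital $*$-homomorphisms $\C(\YY)\to\C(\XX)\tens R$. First I would reduce to this unital setting: writing $\sB=\C(\YY)$ and $\sA=\C(\XX)$, a quantum family $\Psi\in\Mor(\sB,\sA\tens\C_0(\UU))$ is, since $\sB$ is unital and $\sA$ is unital and finite dimensional, the same datum as a unital $*$-homomorphism $\sB\to\sA\tens\M(\C_0(\UU))=\M(\sA\tens\C_0(\UU))$ (the Podle\'s nondegeneracy being automatic, as $\Psi(\I)=\I$). So one works throughout with unital morphisms into $\sA\tens R$ for unital $R$. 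Fix \cst-generators $b_1,\dots,b_N$ of $\sB$, a linear basis $e_1,\dots,e_m$ of $\sA$, and the dual functionals $\omega_1,\dots,\omega_m\in\sA^*$. For a unital $*$-homomorphism $\Phi\colon\sB\to\sA\tens R$ I call $\phi_k=(\omega_k\tens\id)\comp\Phi\colon\sB\to R$ its \emph{coefficients}, so that $\Phi(b)=\sum_{k=1}^m e_k\tens\phi_k(b)$ and $\|\phi_k(b)\|\le\|\omega_k\|\,\|b\|$; note that the $\phi_k(b_j)$ generate the same \cst-subalgebra of $R$ as all the $\phi_k(b)$, since multiplicativity of $\Phi$ expresses each $\phi_l(bb')$ as a fixed linear combination of the products $\phi_i(b)\phi_j(b')$.

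The construction then proceeds by a ``product over all models'' device. I would take a set $\{(C_i,\Phi_i)\}_{i\in I}$ of representatives of the isomorphism classes of pairs consisting of a unital \cst-algebra $C_i$ and a unital $*$-homomorphism $\Phi_i\colon\sB\to\sA\tens C_i$ with $C_i$ generated by the coefficients of $\Phi_i$; as each such $C_i$ is generated by the $mN$ elements $\phi^i_k(b_j)$, all of norm at most $\max_{j,k}\|\omega_k\|\,\|b_j\|$, these pairs form a set rather than a proper class. Put $P=\prod_{i\in I}C_i$ and $\hat\Phi=(\Phi_i)_{i\in I}\colon\sB\to\prod_{i\in I}(\sA\tens C_i)$. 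Here the finite dimensionality of $\sA$ enters crucially: it gives $\prod_i(\sA\tens C_i)=\sA\tens\prod_iC_i=\sA\tens P$ (after choosing the basis both are $(\prod_iC_i)^m$), so that $\hat\Phi$ is a unital $*$-homomorphism $\sB\to\sA\tens P$ with coefficients $\hat\phi_k(b)=(\phi^i_k(b))_{i\in I}\in P$. I then set $\C(\EE)$ to be the unital \cst-subalgebra of $P$ generated by $\{\hat\phi_k(b_j):1\le k\le m,\ 1\le j\le N\}$ and let $\Phi=\hat\Phi$, which indeed maps into $\sA\tens\C(\EE)$. By construction $\C(\EE)$ is unital and finitely generated, which settles the last assertion.

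For the universal property, given $\Psi\in\Mor(\sB,\sA\tens\C_0(\UU))$ regarded as a unital $*$-homomorphism $\sB\to\sA\tens R$ with $R=\M(\C_0(\UU))$, I would corestrict it to the \cst-subalgebra $C\subseteq R$ generated by its coefficients; then $(C,\Psi)$ is isomorphic to some representative $(C_{i_0},\Phi_{i_0})$, and composing the coordinate projection $P\to C_{i_0}$, the isomorphism $C_{i_0}\cong C$ and the inclusion $C\hookrightarrow R$ yields a $*$-homomorphism $\Lambda\colon\C(\EE)\to\M(\C_0(\UU))$. One checks that $C$ is a \emph{unital} subalgebra of $R$, because $(\omega_k\tens\id)\Psi(\I_\sB)=\omega_k(\I_\sA)\I_R$ is a nonzero multiple of $\I_R$ for some $k$; hence $\Lambda$ is unital and therefore a morphism in $\Mor(\C(\EE),\C_0(\UU))$. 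Tracking coefficients shows $\Lambda(\hat\phi_k(b_j))=(\omega_k\tens\id)\Psi(b_j)$, so $(\id\tens\Lambda)\comp\Phi$ and $\Psi$ are $*$-homomorphisms agreeing on the generators $b_j$, hence equal; and since the $\hat\phi_k(b_j)$ generate $\C(\EE)$, the very same computation forces $\Lambda$ to be unique.

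The essential difficulty is the existence of $\C(\EE)$, i.e.\ producing a single \cst-algebra carrying a universal family; the factorisation and its uniqueness are then formal. The two inputs that make existence work are the set-theoretic reduction to a set of models and, above all, the finite dimensionality of $\sA=\C(\XX)$, which is exactly what makes coefficient extraction a bounded operation commuting with infinite products and thus turns $\prod_i(\sA\tens C_i)$ back into $\sA\tens P$. (An alternative route splits $\sA=\bigoplus_s M_{n_s}(\CC)$ and represents, for each block, the assignment sending a unital \cst-algebra $R$ to the set of unital $*$-homomorphisms $\sB\to M_{n_s}(\CC)\tens R$ by a universal matrix-coefficient algebra $E_{n_s}$; the unital free product $\ast_s E_{n_s}$ then represents the product of these functors and equals $\C(\EE)$. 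Here too the bound $\|\hat\phi_k(b_j)\|\le\|b_j\|$ is precisely what guarantees that the defining \cst-relations admit a universal \cst-algebra.)
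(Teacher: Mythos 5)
Your proof is correct, but it takes a genuinely different route from the paper's. The paper proves Theorem \ref{existenceQFM} in the Appendix in two constructive steps: Proposition \ref{firststep} first handles $\C(\YY)=\cst(\FF_n)$, where a morphism $\cst(\FF_n)\to\sM\tens\sD$ is nothing but an $n$-tuple of unitaries in $\sM\tens\M(\sD)$, i.e.\ for each free generator and each block of $\sM\cong\bigoplus_{i=1}^N M_{m_i}(\CC)$ a unitary $m_i\times m_i$ matrix over $\M(\sD)$, so that $\sA_n$ can be written down explicitly as the universal \cst-algebra generated by the entries of $Nn$ unitary matrices; the general finitely generated $\C(\YY)\cong\cst(\FF_n)/\sK$ is then treated by dividing $\sA_n$ by the ideal generated by the slices $(\omega\tens\id)\Phi_n(x)$, $x\in\sK$, $\omega\in\sM^*$. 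You instead run a solution-set-plus-product argument in the spirit of the adjoint functor theorem: no generators and relations at all, but an indexing of all ``models'' $(C_i,\Phi_i)$, their $\ell^\infty$-product $P$, and the observation that finite dimensionality of $\C(\XX)$ makes slice maps bounded and hence identifies $\prod_i(\sA\tens C_i)$ with $\sA\tens P$, after which $\C(\EE)$ is the coefficient subalgebra. Both proofs rest on the same pillars (bounded slices, finite dimensionality of $\C(\XX)$), but they buy different things: the paper's route exhibits $\C(\EE)$ through tame explicit relations (unitarity plus one quotient), which is exactly its advertised improvement over the original proof in \cite{qs}; yours is algebra-free and quick, but the step ``these pairs form a set'' needs tightening --- the class of all such pairs is proper, and the standard repair is to note that each $C_i$, being finitely generated, is separable and thus faithfully representable on a fixed separable Hilbert space, so one may take for $I$ the set of pairs whose first entry is a concrete \cst-subalgebra of the bounded operators on that space. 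Two smaller remarks: your parenthetical free-product alternative, resting on universal \cst-algebras given by norm-bounded families of relations, is essentially the argument of \cite{qs} that the Appendix was written to avoid; and since the paper defines ``finitely generated'' as ``quotient of some $\cst(\FF_n)$'', you should add the standard observation that a unital \cst-algebra generated by finitely many elements is such a quotient (each generator being a linear combination of finitely many unitaries).
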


The universal property of the quantum family of maps introduced in Theorem \ref{existenceQFM} justifies calling $\Phi$ the \emph{quantum family of all maps $\XX\to\YY$} and the quantum space $\EE$ the \emph{quantum space of all maps $\XX\to\YY$}. In \cite{qs} this existence result was proved and used to construct many other universal quantum families of maps which all carried additional algebraic structure.

\begin{remark}
\noindent\begin{enumerate}
\item The universal property defines the pair $\bigl(\C(\EE),\Phi\bigr)$ uniquely (up to isomorphism of such pairs).
\item The \cst-algebra $\C(\EE)$ is generated by the set
\[
\bigl\{(\omega\tens\id)\Phi(c)\st{c}\in\C(\YY),\;\omega\in\C(\XX)^*\bigr\}
\]
(cf.~\cite{qs} or the proof of Theorem \ref{existenceQFM} in the Appendix).
\end{enumerate}
\end{remark}

In the proof of Theorem \ref{existenceQFM} given in \cite{qs} the \cst-algebra $\C(\EE)$ was defined in terms of generators and relations. However arbitrary families of relations needed to be allowed which made the proof rather complicated. In the Appendix we will give a different proof of Theorem \ref{existenceQFM} which avoids these complications. For clarity let us stress that by a finitely generated unital \cst-algebra we mean a \cst-algebra isomorphic to a quotient of the full group \cst-algebra $\cst(\FF_n)$ of the free group on $n$ generators (for some $n\in \NN$).

In the paper \cite{qs} an operation on quantum families of maps was introduced which corresponds (in the classical --- commutative --- case) to the operation of constructing the family of all possible compositions of maps from two families.

\begin{definition}
Let $\XX_1,\XX_2,\XX_3,\DD_1$ and $\DD_2$ be quantum spaces. Consider quantum families of maps
\[
\Psi_1\in\Mor\bigl(\C_0(\XX_2),\C_0(\XX_1)\tens\C_0(\DD_1)\bigr)\quad\text{and}\quad\Psi_2\in\Mor\bigl(\C_0(\XX_3),\C_0(\XX_2)\tens\C_0(\DD_2)\bigr).
\]
The \emph{composition} of $\Psi_1$ and $\Psi_2$ is the quantum family of maps
\[
\Psi_1\vt\Psi_2\in\Mor\bigl(\C_0(\XX_3),\C_0(\XX_1)\tens\C_0(\DD_1)\tens\C_0(\DD_2)\bigr)
\]
defined by
\[
\Psi_1\vt\Psi_2=(\Psi_1\tens\id)\comp\Psi_2.
\]
\end{definition}

Thus the composition of $\Psi_1$ and $\Psi_2$ is a quantum family of maps $\XX_1\to\XX_3$ indexed by $\DD_1\times\DD_2$. Let us remark that composition of quantum families of maps is associative.

Once quantum families of maps are defined one is then lead to considering families with specified properties. One such property is preservation of a state defined as follows.

\begin{definition}
Let $\XX$ and $\YY$ be quantum spaces and let $\phi$ be a state on $\C(\XX)$. Let $\Phi\in\Mor\bigl(\C_0(\XX),\C_0(\XX)\tens\C_0(\YY)\bigr)$ be a quantum family of maps $\XX\to\XX$ indexed by $\YY$. We say that $\Phi$ \emph{preserves $\phi$} or that $\phi$ is \emph{invariant} for $\Phi$ if for any $x\in\C_0(\XX)$ we have
\[
(\phi\tens\id)\Phi(x)=\phi(x)\I_{\C(\YY)}.
\]
\end{definition}

Introduction of the notion of a quantum family of maps preserving a state lead to major developments. In particular invariance of states for actions of compact quantum groups (which are quantum families of maps, see Section \ref{InvQF}) turned out to be very important in e.g.~\cite{Wang} (cf.~also \cite[Section 5]{qs}). A crucial (although also very simple) fact is that if two quantum families of maps preserve a given state then so does their composition (\cite[Proposition 14]{qs}).

In the paper \cite{Wang} it was shown that for a finite quantum space $\XX$ and a state $\phi$ on $\C(\XX)$ there exists a universal compact quantum group acting on $\XX$ preserving the state $\phi$. Additionally it was shown that in general (more precisely for a noncommutative $\C(\XX)$) there is no universal quantum group for all actions of compact quantum groups. This fact often forces us to consider finite quantum spaces (finite dimensional \cst-algebras) together with a distinguished state. This will be also clearly visible throughout this paper.

\section{Quantum families of invertible maps}\label{InvQF}

In this section we will give an alternative description of Wang's quantum automorphism groups of finite dimensional \cst-algebras with a distinguished state. We will only consider the case of a faithful state which is justified by \cite[Proposition 2.3]{apqs}. In the original description of these quantum automorphism groups in \cite{Wang}  little explanation is given as to the origin of the proposed relations imposed on generators of corresponding \cst-algebras. In particular the case of non-tracial states is not treated (this case is treated thoroughly in \cite{WVD}, but with no reference to  actions of resulting quantum groups on finite dimensional \cst-algebras). Our description is similar to the one proposed in \cite[Lemma 1.2 \& Theorem 1.1]{symcoact}, but we choose to focus on the stronger universality property (cf.~Remark \ref{RemInv}\eqref{RemInv2}).

We begin by introducing the notion of a quantum family of invertible maps on a finite quantum space (i.e.~one whose corresponding \cst-algebra is finite dimensional). For the sake of keeping our notation lighter we will drop the custom to refer to any \cst-algebra as $\C(\XX)$ for some quantum space $\XX$ and use more usual symbols such as $\sA$, $\sB$ etc.

\begin{definition}\label{DefInv}
Let $\sM$ and $\sB$ be \cst-algebras with $\sB$ unital and $\sM$ finite dimensional. We say that
\[
\beta\colon\sM\longrightarrow\sM\tens\sB
\]
is a \emph{quantum family of invertible maps} if $\beta(\sM)(\I\tens\sB)$ is dense in $\sM\tens\sB$. We will also refer to this by saying that $\beta$ satisfies the \emph{Podle\'s condition}.
\end{definition}

The Podle\'s condition appeared already in the thesis of Podle\'s (\cite{podPhd}) and was lated put to use in many publications (cf.~\cite{podles,boca}). It is formally very close to the density conditions (\emph{cancellation laws}) central to the definition of a compact quantum group (\cite{cqg}).

Its original meaning is closely related to the notion of an action of a quantum group on a quantum space. For simplicity let us restrict attention to compact quantum groups and compact quantum spaces. If $\GG$ is a compact quantum group and $\XX$ a compact quantum space then an action of $\GG$ on $\XX$ is a unital $*$-homomorphism $\alpha\colon\C(\XX)\to\C(\XX)\tens\C(\GG)$ such that
\[
(\alpha\tens\id)\comp\alpha=(\id\tens\Delta_\GG)\comp\alpha
\]
and the Podle\'s condition is satisfied, i.e. $\alpha\bigl(\C(\XX)\bigr)\bigl(\I\tens\C(\GG)\bigr)$ is dense in $\C(\XX)\tens\C(\GG)$. In particular an action of $\GG$ on $\XX$ is an example of a quantum family of invertible maps. The Podle\'s condition serves as a substitute of the requirement imposed on actions of classical groups on sets, namely that the unit element act as the identity map or, equivalently, that the action be by invertible transformations (\cite[Proposition 2.3]{exa}).

\begin{proposition}\label{compo}
Let $\beta\colon\sM\to\sM\tens\sB$ and $\gamma\colon\sM\to\sM\tens\sC$ be quantum families of maps. If $\beta$ and $\gamma$ satisfy the Podle\'s condition then so does their composition $\beta\vt\gamma$.
\end{proposition}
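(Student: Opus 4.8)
The plan is to unwind the definition $\beta\vt\gamma=(\beta\tens\id)\comp\gamma\colon\sM\to\sM\tens\sB\tens\sC$ and verify the Podle\'s condition for $\beta\vt\gamma$, namely that $(\beta\vt\gamma)(\sM)(\I\tens\sB\tens\sC)$ is dense in $\sM\tens\sB\tens\sC$, by filling the two ``index'' legs $\sB$ and $\sC$ one at a time. A preliminary observation I would record first is that, since $\sM$ and $\sB$ (hence $\sM\tens\sB$) are unital, a nondegenerate morphism into a unital algebra is automatically unital, so $\beta$ is unital; consequently $\beta\tens\id$ is a unital $*$-homomorphism and $(\beta\tens\id)(\I_\sM\tens c)=\I\tens\I\tens c$ for every $c\in\sC$. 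This is what will let me pull multiplication by elements of the third leg through $\beta\tens\id$.

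First I would deal with the $\sC$-leg. Using the unitality just noted, for $m\in\sM$ and $c\in\sC$ one has $(\beta\vt\gamma)(m)(\I\tens\I\tens c)=(\beta\tens\id)\bigl(\gamma(m)(\I\tens c)\bigr)$, so $(\beta\vt\gamma)(\sM)(\I\tens\I\tens\sC)$ is exactly the image under $\beta\tens\id$ of $\gamma(\sM)(\I\tens\sC)$. By the Podle\'s condition for $\gamma$ the latter is dense in $\sM\tens\sC$, and since $\beta\tens\id$ is a bounded $*$-homomorphism with closed range $(\beta\tens\id)(\sM\tens\sC)$, continuity gives that the closed linear span of $(\beta\vt\gamma)(\sM)(\I\tens\I\tens\sC)$ equals $(\beta\tens\id)(\sM\tens\sC)$.

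Next I would fill the $\sB$-leg. Writing $\I\tens b\tens c=(\I\tens\I\tens c)(\I\tens b\tens\I)$, it suffices (by continuity and bilinearity of multiplication, which justifies passing between the relevant closed spans) to multiply the range $(\beta\tens\id)(\sM\tens\sC)$ on the right by $\I\tens\sB\tens\I$. Because $\sM$ is finite-dimensional, every element of $\sM\tens\sC$ is a finite sum $\sum_i m_i\tens c_i$, so a generic element of the range times $\I\tens b\tens\I$ has the form $\sum_i\bigl(\beta(m_i)(\I\tens b)\bigr)\tens c_i$. The Podle\'s condition for $\beta$ says $\beta(\sM)(\I\tens\sB)$ is dense in $\sM\tens\sB$; combining this with the elementary fact that if $E$ has dense span in $\sM\tens\sB$ then $\{e\tens c\}$ has dense span in $\sM\tens\sB\tens\sC$, I conclude that the closed span of these elements is all of $\sM\tens\sB\tens\sC$. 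Chaining the two steps yields the desired density.

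The computation itself is short; what requires care is the bookkeeping of the three tensor legs and the order of the two density arguments. The crucial enabling point, which I would single out as the mild obstacle, is the unitality of $\beta$: this is what allows multiplication by the $\sC$-leg to be absorbed inside $\beta\tens\id$, and hence lets the Podle\'s condition for $\gamma$ be applied cleanly before that for $\beta$. Without it one cannot decouple the two legs so directly.
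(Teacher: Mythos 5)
Your proof is correct and follows essentially the same route as the paper's: both arguments first use the Podle\'s condition for $\gamma$ to fill the $\sC$-leg and then the Podle\'s condition for $\beta$ to fill the $\sB$-leg, exploiting finite-dimensionality of $\sM$ to reduce to finite sums of simple tensors. The only difference is bookkeeping: the paper carries out the rearrangement in one chain of closed-span identities via an explicit flip-and-multiplication map (a computation that, incidentally, does not even need unitality of $\beta$), whereas you absorb the $\sC$-leg factor into $\beta\tens\id$ using unitality and then invoke the closed-range property of $*$-homomorphisms as an intermediate step.
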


\begin{proof}
We will write $[\mathcal{X}]$ for the closed linear span of a subset $\mathcal{X}$ of a normed vector space. We also write $[\,\cdots]$ instead of $[\{\,\cdots\}]$ whenever necessary.

Let $\mu_\sB$ be the multiplication map $\sB\atens\sB\to\sB$ and let $\sigma$ denote the flip $\sC\tens\sB\to\sB\tens\sC$. Assuming that $\beta$ and $\gamma$ satisfy the Podle\'s condition we compute
\[
\begin{split}
\bigl[(\beta\vt\gamma)(a)(\I\tens{X})\st{a}&\in\sM,\;X\in\sB\tens\sC\bigr]
=\bigl[(\beta\vt\gamma)(a)(\I\tens{b}\tens{c})\st{a}\in\sM,\;b\in\sB,\;c\in\sC\bigr]\\
&=\bigl[\bigl((\beta\tens\id)\gamma(a)\bigr)(\I\tens{b}\tens{c})\st{a}\in\sM,\;b\in\sB,\;c\in\sC\bigr]\\
&=\bigl[(\id\tens{\mu_\sB}\tens\id)(\beta\tens\sigma)\bigl((\gamma(a)(\I\tens{c}))\tens{b}\bigr)\st{a}\in\sM,\;b\in\sB,\;c\in\sC\bigr]\\
&=\bigl[(\id\tens{\mu_\sB}\tens\id)(\beta\tens\sigma)\bigl((a'\tens{c'})\tens{b}\bigr)\st{a'}\in\sM,\;b\in\sB,\;c'\in\sC\bigr]\\
&=\bigl[(\id\tens{\mu_\sB}\tens\id)(\beta\tens\id)\bigl((a'\tens{b})\tens{c'}\bigr)\st{a'}\in\sM,\;b\in\sB,\;c'\in\sC\bigr]\\
&=\bigl[\bigl(\beta(a')(\I\tens{b})\bigr)\tens{c'}\st{a'}\in\sM,\;b\in\sB,\;c'\in\sC\bigr]\\
&=\sM\tens\sB\tens\sC.
\end{split}
\]
(note that we used the fact that, as $\sM$ is finite dimensional, the ranges of $\beta$ and $\gamma$ contain only finite sums of simple tensors).
\end{proof}

From now on let us fix a faithful state $\phi$ on the \cst-algebra $\sM$. Let $\{e_1,\dotsc,e_n\}$ be a basis of $\sM$ which is orthonormal for the scalar product $\is{\cdot}{\cdot}_\phi$ defined by $\is{x}{y}_\phi=\phi(x^*y)$ for $x,y\in\sM$. Let $\{m^p_{k,l}\}_{k,l,p=1,\dotsc,n}$ be structure constants of $\sM$, i.e.\ complex numbers such that
\[
e_ke_l=\sum_{p=1}^nm_{k,l}^pe_p,\qquad{p}=1,\dotsc,n,
\]
and let $\{\lambda^i\}_{i=1,\dotsc,n}$ be the coefficients of $\I$ in the basis $\{e_1,\dotsc,e_n\}$:
\[
\sum_{i=1}^n\lambda^ie_i=\I.
\]
Finally let $\cT$ be the (invertible) scalar matrix with matrix elements $\{\tau_{k,l}\}_{k,l=1,\dotsc,n}$ such that
\[
e_l^*=\sum_{k=1}^n\tau_{k,l}e_k,\qquad{l}=1,\dotsc,n.
\]

Let $\sB$ be a unital \cst-algebra and $\beta\colon\sM\to\sM\tens\sB$ a linear map. Define a matrix
\[
\bb=\begin{bmatrix}
b_{1,1}&\dots&b_{1,n}\\
\vdots&\ddots&\vdots\\
b_{n,1}&\dots&b_{n,n}
\end{bmatrix}
\]
of elements of $\sB$ by
\[
\beta(e_j)=\sum_{i=1}^ne_i\tens{b_{i,j}},\qquad{i}=1,\dotsc,n.
\]
We will use the following notation:
\[
\Bar{\bb}=\begin{bmatrix}
b_{1,1}^*&\dots&b_{1,n}^*\\
\vdots&\ddots&\vdots\\
b_{n,1}^*&\dots&b_{n,n}^*
\end{bmatrix}.
\]

\begin{proposition}
Let $\sM$, $\phi$, $\{m^p_{k,l}\}_{k,l,p=1,\dotsc,n}$, $\{\lambda^i\}_{i=1,\dotsc,n}$ $\cT$ and  $\bb$  be as above. Then
\begin{enumerate}
\item\label{wang1} we have $(\phi\tens\id)\beta(x)=\phi(x)\I$ for all $x\in\sM$ if and only if
\begin{equation}\label{Wang1}
\sum_{i=1}^n\phi(e_i)b_{i,j}=\phi(e_j)\I,\qquad{j}=1,\dotsc,n,
\end{equation}
\item\label{wang2} $\beta$ is multiplicative if and only if
\begin{equation}\label{Wang2}
\sum_{k,l=1}^nm_{k,l}^pb_{k,i}b_{l,j}
=\sum_{q=1}^nm_{i,j}^qb_{p,q},\qquad{p},i,j=1,\dotsc,n,
\end{equation}
\item\label{wang3} $\beta(\I_{\sM})=\I_{\sM}\tens\I_{\sB}$ if and only if
\begin{equation}\label{Wang3}
\sum_{j=1}^n\lambda^jb_{i,j}=\lambda^i\I,\qquad{i}=1,\dotsc,n,
\end{equation}
\item\label{wang4} $\beta$ is a $*$-map if and only if
\begin{equation}\label{Wang4}
(\cT\tens\I)\Bar{\bb}=\bb(\cT\tens\I),
\end{equation}
\item\label{wang5} if \eqref{wang1}--\eqref{wang4} are satisfied then $\beta(\sM)(\I\tens\sB)$ is dense in $\sM\tens\sB$ if and only if $\bb$ is unitary.
\end{enumerate}
\end{proposition}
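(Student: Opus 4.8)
The plan is to dispatch items \eqref{wang1}--\eqref{wang4} by a single mechanism and to reserve the genuine work for \eqref{wang5}. Since $\{e_1,\dotsc,e_n\}$ is a basis of $\sM$, there are functionals $\omega_1,\dotsc,\omega_n\in\sM^*$ dual to it, and applying $\omega_i\tens\id$ shows that an element $\sum_ie_i\tens s_i$ of $\sM\tens\sB$ vanishes if and only if every $s_i$ vanishes. Consequently each of the four conditions on $\beta$ is equivalent to the equality, in $\sB$, of the coefficients of $e_i$ on both sides of the relevant identity. For \eqref{wang1} one reads off $(\phi\tens\id)\beta(e_j)=\sum_i\phi(e_i)b_{i,j}$, compares with $\phi(e_j)\I$, and uses linearity to reduce ``for all $x$'' to ``for all $e_j$'', giving \eqref{Wang1}. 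For \eqref{wang2} one expands $\beta(e_ie_j)=\sum_{p,q}m^q_{i,j}e_p\tens b_{p,q}$ and $\beta(e_i)\beta(e_j)=\sum_pe_p\tens\bigl(\sum_{k,l}m^p_{k,l}b_{k,i}b_{l,j}\bigr)$ and equates coefficients of $e_p$ to obtain \eqref{Wang2}; multiplicativity on the basis is equivalent to multiplicativity everywhere because both $(x,y)\mapsto\beta(xy)$ and $(x,y)\mapsto\beta(x)\beta(y)$ are bilinear. Item \eqref{wang3} is the comparison of coefficients in $\beta(\I)=\sum_ie_i\tens\bigl(\sum_j\lambda^jb_{i,j}\bigr)$ with $\I\tens\I=\sum_ie_i\tens\lambda^i\I$, yielding \eqref{Wang3}.

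For \eqref{wang4} the only subtlety is that $*$ is conjugate-linear, so I first note that $\beta(x^*)=\beta(x)^*$ for all $x$ is equivalent to the same identity for $x=e_l$, $l=1,\dotsc,n$. Writing $\beta(e_l^*)=\sum_k\tau_{k,l}\beta(e_k)$ gives the $e_i$-coefficient $\sum_k\tau_{k,l}b_{i,k}=(\bb(\cT\tens\I))_{i,l}$, while $\beta(e_l)^*=\sum_ie_i^*\tens b_{i,l}^*=\sum_me_m\tens\bigl(\sum_i\tau_{m,i}b_{i,l}^*\bigr)$ has $e_m$-coefficient $((\cT\tens\I)\Bar{\bb})_{m,l}$; equating these over all indices yields exactly \eqref{Wang4}.

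The heart of the matter is \eqref{wang5}, and the key observation is that, once \eqref{wang1}--\eqref{wang4} hold (so that $\beta$ is a unital $*$-homomorphism preserving $\phi$), the matrix $\bb$ is automatically an isometry, with \emph{no} appeal to the Podle\'s condition. Indeed, using that $\beta$ is a $*$-homomorphism and then that $\phi$ is invariant,
\[
(\bb^*\bb)_{i,j}=\sum_kb_{k,i}^*b_{k,j}=(\phi\tens\id)\bigl(\beta(e_i)^*\beta(e_j)\bigr)=(\phi\tens\id)\beta(e_i^*e_j)=\phi(e_i^*e_j)\I=\is{e_i}{e_j}_\phi\I=\delta_{i,j}\I,
\]
so $\bb^*\bb=\I$. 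It therefore remains to show that, for such an isometry, the Podle\'s condition is equivalent to $\bb\bb^*=\I$.

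To this end I would identify $\sM\tens\sB$ with $\sB^n$ via $\sum_ie_i\tens s_i\mapsto(s_1,\dotsc,s_n)$; a direct expansion shows that the linear span of $\beta(\sM)(\I\tens\sB)$ is precisely $\bb\cdot\sB^n=\{\bb c\st c\in\sB^n\}$, the range of left multiplication by $\bb$, so the Podle\'s condition says exactly that $\bb\cdot\sB^n$ is dense in $\sB^n$. If $\bb$ is unitary this range is all of $\sB^n$ (each $d$ equals $\bb(\bb^*d)$), so the condition holds. Conversely, assuming density, set $p=\bb\bb^*$, a projection since $\bb^*\bb=\I$; left multiplication by $p$ is a bounded operator on $\sB^n$ fixing every $\bb c$ (because $p\bb c=\bb(\bb^*\bb)c=\bb c$), hence fixing the dense set $\bb\cdot\sB^n$ and so all of $\sB^n$, whence $p=\I$ and $\bb$ is unitary. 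The main obstacle is precisely this last equivalence: the isometry identity $\bb^*\bb=\I$ comes for free from the algebraic hypotheses, and the real content is translating the Podle\'s density condition into surjectivity of $\bb$ on $\sB^n$ and running the ``a continuous operator equal to $\id$ on a dense set equals $\id$'' argument to upgrade the isometry to a unitary.
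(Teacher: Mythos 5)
Your proof is correct and matches the argument the paper intends: the paper itself merely labels \eqref{wang1}--\eqref{wang3} easy computations and cites \cite[Proof of Theorem 1.2]{apqs} for \eqref{wang4} and \eqref{wang5}, and the details you supply --- coefficient comparison against the dual basis for \eqref{Wang1}--\eqref{Wang4}, the automatic isometry $\bb^*\bb=\I$ from $\phi$-invariance combined with multiplicativity and the $*$-property, and the identification of the linear span of $\beta(\sM)(\I\tens\sB)$ with $\bb\cdot\sB^n$ so that the Podle\'s condition forces the projection $\bb\bb^*$ to fix a dense set and hence equal $\I$ --- are exactly the standard ones behind that citation. The only convention you rely on, correctly and consistently with the paper's own use of closed linear spans in the proof of Proposition \ref{compo}, is that density of $\beta(\sM)(\I\tens\sB)$ is read as density of its linear span.
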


\begin{proof}
Points \eqref{wang1}, \eqref{wang2} and \eqref{wang3} are easy computations, while \eqref{wang4} and \eqref{wang5} are explained in \cite[Proof of Theorem 1.2]{apqs}.
\end{proof}

We are now ready to formulate the main result of this section.

\begin{theorem}\label{zeroth}
Fix a finite dimensional \cst-algebra $\sM$ with a faithful state $\phi$. Choose an orthonormal (with respect to the scalar product induced by $\phi$) basis $e_1, \ldots, e_n$ in $\sM$ and let $\{m^p_{k,l}\}_{k,l,p=1,\dotsc,n}$, $\{\lambda^i\}_{i=1,\dotsc,n}$ and $\cT$  be as above.
Let $\sA$ be the universal \cst-algebra generated by abstract elements $\{b_{i,j}\}_{i,j=1,\dotsc,n}$ such that the matrix
\[
\bb=\begin{bmatrix}
b_{1,1}&\dots&b_{1,n}\\
\vdots&\ddots&\vdots\\
b_{n,1}&\dots&b_{n,n}
\end{bmatrix}
\]
is unitary and relations \eqref{Wang1}--\eqref{Wang4} hold. Then the formula
\[
\alpha(e_j)=\sum_{i=1}^ne_i\tens{b_{i,j}},\qquad{j}=1,\dotsc,n.
\]
defines a unital $*$-homomorphism from $\sM$ to $\sM \tens \sA$ and for any $x\in\sM$ we have
\begin{equation}\label{invphi}
(\phi\tens\id)\alpha(x)=\phi(x)\I_\sA.
\end{equation}
Moreover
\begin{enumerate}
\item\label{first} for any unital \cst-algebra $\sC$ and any invertible quantum family of maps $\gamma\colon\sM\to\sM\tens\sC$ preserving the state $\phi$ there exists a unique unital $*$-homomorphism $\Lambda\colon\sA\to\sC$ such that the diagram
\[
\xymatrix{
\sM\ar[rr]^-{\alpha}\ar@{=}[d]&&\sM\tens\sA\ar[d]^{\id\tens\Lambda}\\
\sM\ar[rr]^-{\gamma}&&\sM\tens\sC}
\]
is commutative;
\item\label{second} there exists a unique unital $*$-homomorphism $\Delta\colon\sA\to\sA\tens\sA$ such that $(\alpha\tens\id)\comp\alpha=(\id\tens\Delta)\comp\alpha$. The map $\Delta$ is coassociative and the pair $(\sA,\Delta)$ gives rise to a compact quantum $\GG$, while $\alpha$ becomes an action of $\GG$ on $\sM$;
\item\label{third} if $\SSS$ is a compact quantum semigroup and $\rho\colon\sM\to\sM\tens\C(\SSS)$ is an action of $\SSS$ on $\sM$ preserving $\phi$ and satisfying Podle\'s condition then the unique $\Lambda\colon\sA\to\C(\SSS)$ such that $\rho=(\id\tens\Lambda)\comp\alpha$ satisfies
\[
\Delta_{\SSS}\comp\Lambda=(\Lambda\tens\Lambda)\comp\Delta;
\]
\item\label{fourth} $\GG$ is isomorphic to Wang's quantum automorphism group of $(\sM,\phi)$. 
\end{enumerate}
\end{theorem}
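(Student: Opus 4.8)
The assertions preceding the enumeration are read off from the previous Proposition. In $\sA$ the matrix $\bb$ is unitary and the relations \eqref{Wang1}--\eqref{Wang4} hold by construction, so points \eqref{wang2}--\eqref{wang4} show that the linear map determined on the basis by $\alpha(e_j)=\sum_i e_i\tens b_{i,j}$ is multiplicative, unital and $*$-preserving, hence a unital $*$-homomorphism, while point \eqref{wang1} is precisely \eqref{invphi}. For part \eqref{first} I would expand any invertible $\phi$-preserving family as $\gamma(e_j)=\sum_i e_i\tens c_{i,j}$; since $\gamma$ is a unital $*$-homomorphism satisfying Podle\'s condition and preserving $\phi$, the same Proposition read in the reverse direction forces $(c_{i,j})$ to be unitary and to obey \eqref{Wang1}--\eqref{Wang4}. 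The universal property of $\sA$ then produces a unique unital $*$-homomorphism $\Lambda$ with $\Lambda(b_{i,j})=c_{i,j}$, and slicing against the linearly independent, generating basis $\{e_i\}$ yields both $(\id\tens\Lambda)\comp\alpha=\gamma$ and the uniqueness of $\Lambda$.

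For part \eqref{second} the composition $\alpha\vt\alpha=(\alpha\tens\id)\comp\alpha$ is a unital $*$-homomorphism that preserves $\phi$ (composition of state-preserving families is state-preserving) and, since $\alpha$ satisfies Podle\'s condition because $\bb$ is unitary, again satisfies Podle\'s condition by Proposition \ref{compo}. Applying part \eqref{first} with $\sC=\sA\tens\sA$ gives a unique unital $*$-homomorphism $\Delta\colon\sA\to\sA\tens\sA$ with $(\id\tens\Delta)\comp\alpha=(\alpha\tens\id)\comp\alpha$, and comparing coefficients in the basis forces the matrix formula $\Delta(b_{k,j})=\sum_i b_{k,i}\tens b_{i,j}$, from which coassociativity is immediate. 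To upgrade $(\sA,\Delta)$ to a compact quantum group I would invoke Woronowicz's theory of compact matrix quantum groups: $\sA$ is generated by the entries of $\bb$, the comultiplication has the displayed form, $\bb$ is unitary, and by \eqref{Wang4} one has $\Bar{\bb}=(\cT\tens\I)^{-1}\bb(\cT\tens\I)$, so $\Bar{\bb}$ is invertible as well. Finally $\alpha$ is an action because it intertwines $\alpha$ and $\Delta$ and satisfies Podle\'s condition.

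Part \eqref{third} is a uniqueness argument. With $\Lambda$ the morphism supplied by part \eqref{first} for the family $\rho$, both $\Delta_\SSS\comp\Lambda$ and $(\Lambda\tens\Lambda)\comp\Delta$ are unital $*$-homomorphisms $\sA\to\C(\SSS)\tens\C(\SSS)$, so by generation and linear independence of $\{e_i\}$ it suffices to verify $\bigl(\id\tens(\Delta_\SSS\comp\Lambda)\bigr)\comp\alpha=\bigl(\id\tens((\Lambda\tens\Lambda)\comp\Delta)\bigr)\comp\alpha$. The left side equals $(\id\tens\Delta_\SSS)\comp\rho=(\rho\tens\id)\comp\rho$ by the action axiom for $\rho$, while the right side, using part \eqref{second} and then $(\id\tens\Lambda)\comp\alpha=\rho$, reduces to $(\id\tens\Lambda\tens\Lambda)\comp(\alpha\tens\id)\comp\alpha=(\rho\tens\id)\comp\rho$ as well. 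For part \eqref{fourth} I would use that Wang's group $\GG_{\mathrm{aut}}$ acts on $(\sM,\phi)$ by a $\phi$-preserving action $\alpha_{\mathrm{aut}}$ satisfying Podle\'s condition, and is universal among compact quantum groups doing so. Part \eqref{first} applied to $\alpha_{\mathrm{aut}}$ gives a morphism $\Lambda\colon\sA\to\C(\GG_{\mathrm{aut}})$ (which intertwines comultiplications by part \eqref{third}), and the universality of $\GG_{\mathrm{aut}}$ applied to the compact quantum group $\GG$ gives a morphism $\pi\colon\C(\GG_{\mathrm{aut}})\to\sA$ with $\alpha=(\id\tens\pi)\comp\alpha_{\mathrm{aut}}$. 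Then $\pi\comp\Lambda$ and $\id_\sA$ both factorise $\alpha$ through itself as in part \eqref{first}, so $\pi\comp\Lambda=\id_\sA$; meanwhile $\alpha_{\mathrm{aut}}=(\id\tens(\Lambda\comp\pi))\comp\alpha_{\mathrm{aut}}$ shows that $\Lambda\comp\pi$ fixes the generating matrix coefficients of $\alpha_{\mathrm{aut}}$, hence $\Lambda\comp\pi=\id$. Thus $\pi$ and $\Lambda$ are mutually inverse and intertwine the comultiplications, so $\GG\cong\GG_{\mathrm{aut}}$; alternatively one may simply match our defining relations to those used in the cited works.

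The step I expect to be the crux is the passage from the comultiplication $\Delta$ to a genuine compact quantum group in part \eqref{second}: obtaining $\Delta$ is painless via the universal property and Proposition \ref{compo}, but the cancellation conditions rest on the observation, extracted from \eqref{Wang4}, that $\Bar{\bb}$ is conjugate to the unitary $\bb$ and therefore invertible, which is exactly what makes Woronowicz's criterion applicable. Keeping the directions of $\Lambda$ and $\pi$ straight in part \eqref{fourth} is the remaining point that demands care.
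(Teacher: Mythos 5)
Your proof is correct, and in most places it follows the paper's own argument: the preliminary assertions and part (1) are read off from the preceding Proposition exactly as in the paper; part (2) uses Proposition \ref{compo} together with part (1) to obtain $\Delta$, the matrix formula $\Delta(b_{i,j})=\sum_k b_{i,k}\tens b_{k,j}$ for coassociativity, and Woronowicz's criterion from \cite{RemCQG} (your observation that \eqref{Wang4} gives $\Bar{\bb}=(\cT\tens\I)^{-1}\bb(\cT\tens\I)$, hence invertible, is equivalent to the paper's remark that $\bb^\top=\bigl(\Bar{\bb}\bigr)^*$ is invertible); and part (4) is the standard mutual-factorization argument between two universal objects, which the paper states as ``clear'' and you spell out, correctly keeping track of the directions of $\Lambda$ and $\pi$. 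The genuine divergence is part (3). The paper routes the argument through the quantum semigroup $(\widetilde{\sA},\widetilde{\Delta})$ of \emph{all} $\phi$-preserving maps on $\sM$ from \cite[Theorem 16]{qs}: it takes the surjection $\widetilde{\Lambda}\colon\widetilde{\sA}\to\sA$, the morphism $\Theta=\Lambda\comp\widetilde{\Lambda}$ whose intertwining property is supplied by \cite{qs}, and then lifts elements of $\sA$ through $\widetilde{\Lambda}$ to conclude. You instead verify $\Delta_\SSS\comp\Lambda=(\Lambda\tens\Lambda)\comp\Delta$ directly: both sides are unital $*$-homomorphisms, $\sA$ is generated by the slices of $\alpha$, and after composing with $\alpha$ both sides collapse to $(\rho\tens\id)\comp\rho$, using only the action identity $(\id\tens\Delta_\SSS)\comp\rho=(\rho\tens\id)\comp\rho$ and part (2). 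Your route is more elementary and self-contained, needing no input from \cite{qs} beyond what the theorem itself provides; the paper's route, at the cost of invoking the earlier universal construction, exhibits the extra structural fact that $(\sA,\Delta)$ is a quotient quantum semigroup of the universal quantum semigroup $\widetilde{\sA}$, in line with the paper's theme of tying the invertible-family picture to those earlier results.
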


\begin{proof}
The definition of $\sA$ is such that the linear mapping
\[
e_j\longmapsto\sum_{i=1}^ne_i\tens{b_{i,j}},\qquad{j}=1,\dotsc,n
\]
extends to a unital $*$-homomorphism $\sM\to\sM\tens\sA$ such that for any $x\in\sA$ we have \eqref{invphi}.

Ad \eqref{first}. If $\gamma\colon\sM\to\sM\tens\sC$ is a quantum family of invertible maps preserving $\phi$, then defining $\{c_{i,j}\}_{i,j=1,\dotsc,n}$ by
\[
\gamma(e_j)=\sum_{i=1}^ne_i\tens{c_{i,j}},\qquad{j}=1,\dotsc,n,
\]
we obtain elements satisfying relations we used to define the \cst-algebra $\sA$. Therefore there exists a unique unital $*$-homomorphism $\Lambda\colon\sA\to\sC$ such that
\[
\Lambda(b_{i,j})=c_{i,j},\qquad{i},j=1,\dotsc,n
\]
and it is immediate that $(\id\tens\Lambda)\comp\alpha=\gamma$. This last property defines $\Lambda$ uniquely because $\sA$ is generated by $\{b_{i,j}\}_{i,j=1,\dotsc,n}$.

Ad \eqref{second}. We will proceed as in proofs of \cite[Theorems 6, 16 \& 21]{qs}. By its very definition $\alpha\colon\sM\to\sM\tens\sA$ is a quantum family of invertible maps preserving $\phi$. Now Proposition \ref{compo} asserts that so is $\alpha\vt\alpha$. Thus statement \eqref{first} gives a unique $\Delta\colon\sA\to\sA\tens\sA$ such that
\[
\alpha\vt\alpha=(\id\tens\Delta)\comp\alpha.
\]
Coassociativity of $\Delta$ follows from the associativity of the operation of composition of quantum families of maps exactly as in e.g.~\cite[Proof of Theorem 6(2)]{qs} (we need here the obvious fact that $\sA$ is generated by $\bigl\{(\omega\tens\id)\alpha(m)\st{m}\in\sM;\;\omega\in\sM^*\bigr\}$). It is also easy to check that
\[
\Delta(b_{i,j})=\sum_{k=1}^nb_{i,k}\tens{b_{k,j}},\qquad{i},j=1,\dotsc,n
\]
(which might as well be used to prove coassociativity of $\Delta$).

As the matrix $\bb$ is unitary and its transpose $\bb^\top=\bigl(\Bar{\bb}\bigr)^*$ is invertible (due to relation \eqref{Wang4}), the results of \cite{RemCQG} guarantee that $\Delta$ defines on $\sA$ the structure of the algebra of continuous functions on a compact quantum group. The remaining statements of \eqref{second} are clear.

Ad \eqref{third}. Let $\widetilde{\alpha}\colon\sM\to\sM\tens\widetilde{\sA}$ be the quantum family of all maps on $\sM$ preserving $\phi$. Then there exists a unique unital $*$-homomorphism $\widetilde{\Lambda}\colon\widetilde{\sA}\to\sA$ such that
\[
\alpha=(\id\tens\widetilde{\Lambda})\comp\widetilde{\alpha}.
\]
Moreover, since $\sA$ is generated by $\bigl\{(\omega\tens\id)\alpha(m)\st{m}\in\sM;\;\omega\in\sM^*\bigr\}$, one easily sees that $\widetilde{\Lambda}$ is surjective, as for each $m\in\sM$ and $\omega\in\sM^*$ we have
\[
(\omega\tens\id)\alpha(m)=\widetilde{\Lambda}\bigl((\omega\tens\id)\widetilde{\alpha}(m)\bigr).
\]
By \cite[Theorem 16]{qs} there is a comultiplication $\widetilde{\Delta}$ on $\widetilde{\sA}$ and we have
\[
\Delta\comp\widetilde{\Lambda}=(\widetilde{\Lambda}\tens\widetilde{\Lambda})\comp\widetilde{\Delta}.
\]

Now, if $\rho\colon\sM\to\sM\tens\C(\SSS)$ is an action of a compact quantum semigroup $\SSS$ preserving $\phi$ then there exists a unique $\Theta\colon\widetilde{\sA}\to\C(\SSS)$ such that
\[
\rho=(\id\tens\Theta)\comp\widetilde{\alpha}.
\]
Moreover, we have $\Delta_\SSS\comp\Theta=(\Theta\tens\Theta)\comp\widetilde{\Delta}$.

Finally, by \eqref{first}, there exists a unique $\Lambda\colon\sA\to\C(\SSS)$ such that
\[
\rho=(\id\tens\Lambda)\comp\alpha.
\]
By uniqueness of $\Theta$, we obviously have
\[
\Theta=\Lambda\comp\widetilde{\Lambda}.
\]

All this information is summarized in the following commutative diagram:
\begin{equation}\label{diagr}
\xymatrix
{
\widetilde{\sA}\ar[rr]^{\widetilde{\Delta}}\ar[d]^{\widetilde{\Lambda}}\ar@<-1ex>@/^-5ex/[dd]_{\Theta}
&&\widetilde{\sA}\tens\widetilde{\sA}\ar[d]_{\widetilde{\Lambda}\tens\widetilde{\Lambda}}\ar@<3ex>@/^5ex/[dd]^{\Theta\tens\Theta}\\
\sA\ar[rr]^{\Delta}\ar[d]^{\Lambda}&&\sA\tens\sA\\
\C(\SSS)\ar[rr]^{\Delta_\SSS}&&\C(\SSS)\tens\C(\SSS)
}
\end{equation}

It remains to prove that we can complete \eqref{diagr} with the map $\Lambda\tens\Lambda\colon\sA\tens\sA\to\C(\SSS)\tens\C(\SSS)$. But this is not difficult: for $a\in\sA$ we let $\widetilde{a}\in\widetilde{\sA}$ be its lift through $\widetilde{\Lambda}$. Clearly
\[
\begin{split}
\Delta_\SSS\bigl(\Lambda(a)\bigr)&=\Delta_\SSS\bigl((\Lambda\comp\widetilde{\Lambda})(\widetilde{a})\bigr)\\
&=\Delta_\SSS\bigl(\Theta(\widetilde{a})\bigr)\\
&=(\Theta\tens\Theta)\bigl(\widetilde{\Delta}(\widetilde{a})\bigr)\\
&=\bigl([\Lambda\comp\widetilde{\Lambda}]\tens[\Lambda\comp\widetilde{\Lambda}]\bigr)\bigl(\widetilde{\Delta}(\widetilde{a})\bigr)\\
&=\bigl((\Lambda\tens\Lambda)\comp(\widetilde{\Lambda}\tens\widetilde{\Lambda})\bigr)\bigl(\widetilde{\Delta}(\widetilde{a})\bigr)\\
&=(\Lambda\tens\Lambda)\bigl((\widetilde{\Lambda}\tens\widetilde{\Lambda})\bigl(\widetilde{\Delta}(\widetilde{a})\bigr)\bigr)\\
&=(\Lambda\tens\Lambda)\bigl(\Delta\bigl(\widetilde{\Lambda}(\widetilde{a})\bigr)\bigr)\\
&=(\Lambda\tens\Lambda)\bigl(\Delta(a)\bigr).
\end{split}
\]

Ad \eqref{fourth}. Clearly $\GG$ defined by $\C(\GG)=\sA$ and $\Delta_\GG=\Delta$ is a compact quantum group possessing the universal property required of the quantum automorphism group of $(\sM,\phi)$. This is simply \eqref{first} applied only to quantum families of the form $\gamma\colon\sM\to\sM\tens\C(\HH)$, where $\HH$ is a compact quantum group and $\gamma$ is its action on $\sM$ preserving $\phi$.
\end{proof}

\begin{remark}\label{RemInv}
\noindent\begin{enumerate}
\item The theorem implies that in fact $\GG$ depends only on the choice of $\sM$ and $\phi$, and not on the choice of the basis $(e_1, \ldots, e_m)$. In fact $\GG$ is characterised up to an isomorphism as the \emph{compact quantum semigroup} enjoying the universal property in (3); similarly one can say that $(\sA, \alpha)$ is  characterised up to an isomorphism as the \emph{quantum family of maps on $\sM$} enjoying the universal property in (2).
\item It is important to note that the \cst-algebra $\C(\GG)$ is generated by
\[
\bigl\{(\omega\tens\id)\alpha(m)\st{m}\in\sM,\;\omega\in\sM^*\bigr\}
\]
(in other words the action $\alpha$ is \emph{faithful}). We will use this fact repeatedly in what follows.
\item\label{RemInv2} As mentioned in the beginning of this section, Theorem \ref{zeroth} provides an alternative description of Wang's quantum automorphism group of $(\sM,\phi)$ which we will from now on denote by the symbol $\Aut(\sM,\phi)$. The crucial point is that these objects possess a more general universal property than in their original definition. In \cite{Wang} the only requirement was that $\GG$ together with its action on $\sM$ be universal for actions of compact quantum groups preserving $\phi$, but it turns out that it is universal for all quantum families of invertible maps preserving $\phi$.
\item Theorem \ref{genbeta} provides also a more conceptual interpretation of the results of \cite{apqs}. The actions of quantum semigroups studied in that paper are nothing other than certain quantum families of invertible maps.
\item One can easily see that $\Aut(\sM,\phi)$ has a formally even stronger universal property, namely, for any \cst-algebra $\sB$ (not necessarily unital) and any quantum family $\beta\in\Mor(\sM,\sM\tens\sB)$ preserving $\phi$ and satisfying Podle\'s condition (so, in particular, elements of the form $\beta(m)(\I\tens{x})$ must \emph{belong} to $\sM\tens\sB$) there exists a unique $\Lambda\in\Mor\bigl(\C(\Aut(\sM,\phi)),\sB\bigr)$ such that
\[
\beta=(\id\tens\Lambda)\comp\alpha.
\]
This follows from the fact that a morphism from a unital \cst-algebra to $\sB$ is a unital $*$-homomorphism into $\M(\sB)$.
\end{enumerate}
\end{remark}

\section{Hopf images in the compact quantum group context}

In this section we discuss a purely analytic approach to Hopf images of Banica and Bichon (\cite{BanicaBichon}) in the compact quantum group context. Recall that if $\HH$, $\GG$ are compact quantum groups then we say that $\HH$ is a \emph{(closed) quantum subgroup} of $\GG$ if there exists a surjective morphism $\pi\in \Mor (\C(\GG), \C(\HH))$ intertwining the respective coproducts. For an exhaustive discussion of the notion of a closed quantum subgroup we refer to the article \cite{dkss}.


\begin{theorem}\label{HopfImage}
Let $\GG$ be a compact quantum group, $\sB$ a unital \cst-algebra and let $\Lambda\colon\C(\GG)\to\sB$ be a unital $*$-homomorphism. Define
\[
\Lambda_n=(\underbrace{\Lambda\tens\dotsm\tens\Lambda}_n)\comp\Delta^{(n-1)}\colon\C(\GG)\longrightarrow\sB^{\tens{n}}
\]
and let $\sJ=\bigcap\limits_{n=1}^\infty\ker\Lambda_n$. Let $\sS=\C(\GG)/\sJ$ and let $\pi\colon\C(\GG)\to\sS$ be the quotient map. Then
\begin{enumerate}
\item there exists a unique $\cDelta\colon\sS\to\sS\tens\sS$ such that $\cDelta\comp\pi=(\pi\tens\pi)\comp\Delta_\GG$.
\item The pair $(\sS,\cDelta)$ gives rise to a compact quantum group $\KK_\Lambda$ -- more precisely, $\sS= \C(\KK_\Lambda)$ and $\cDelta$ is the corresponding coproduct.
\item The map $\Lambda$ factorizes uniquely as
\begin{equation}\label{Ltp}
\Lambda=\theta\comp\pi
\end{equation}
for a certain unital $*$- homomorphism $\theta\colon\sS\to\sB$.
\item If $\HH$ is a compact quantum subgroup of $\GG$ with corresponding surjective quantum group morphism $\tau\colon\C(\GG)\to\C(\HH)$ such that $\Lambda$ factorizes as
\[
\Lambda=\chi\comp\tau
\]
for some $\chi\colon\C(\HH)\to\sB$ then there exists a unique surjective map $\rho\colon\C(\HH)\to\sS$ such that $\pi=\rho\comp\tau$. This $\rho$ is a compact quantum group morphism.
\end{enumerate}
\end{theorem}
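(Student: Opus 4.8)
The plan is to let a single coalgebraic identity drive all four parts. Writing $\Delta_\GG^{(n-1)}$ for the iterated coproduct, so that $\Lambda_n=\Lambda^{\tens n}\comp\Delta_\GG^{(n-1)}$, coassociativity gives $(\Delta_\GG^{(j-1)}\tens\Delta_\GG^{(k-1)})\comp\Delta_\GG=\Delta_\GG^{(j+k-1)}$ and hence
\[
(\Lambda_j\tens\Lambda_k)\comp\Delta_\GG=\Lambda_{j+k},\qquad j,k\geq 1.
\]
Since $\sJ\subseteq\ker\Lambda_n$ for every $n$, each $\Lambda_n$ descends to a $*$-homomorphism $\theta_n\colon\sS\to\sB^{\tens n}$ with $\theta_n\comp\pi=\Lambda_n$, and $\bigcap_n\ker\theta_n=0$ by the definition of $\sJ$; I will call such a family \emph{jointly faithful}. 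For $a\in\sJ$ the displayed identity yields $(\Lambda_j\tens\Lambda_k)\Delta_\GG(a)=\Lambda_{j+k}(a)=0$ for all $j,k$.

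For (1), uniqueness of $\cDelta$ is automatic from surjectivity of $\pi$, and existence amounts to showing that $(\pi\tens\pi)\comp\Delta_\GG$ annihilates $\sJ$. For $a\in\sJ$ put $z=(\pi\tens\pi)\Delta_\GG(a)\in\sS\tens\sS$; then $(\theta_j\tens\theta_k)(z)=(\Lambda_j\tens\Lambda_k)\Delta_\GG(a)=0$ for all $j,k$, so everything reduces to the following lemma, which I expect to be the main obstacle: \emph{if $\{\theta_n\}$ is jointly faithful then so is $\{\theta_j\tens\theta_k\}$ on $\sS\tens\sS$.} I would deduce this from the slice-map property of the minimal tensor product (an element is zero once all its slices $(\id\tens\tau)$, with $\tau$ a bounded functional on the second leg, vanish): if $(\theta_\alpha\tens\id)(x)=0$ for all $\alpha$, then for every $\tau$ one has $\theta_\alpha\bigl((\id\tens\tau)x\bigr)=(\theta_\alpha\tens\tau)(x)=0$, so joint faithfulness forces $(\id\tens\tau)x=0$ and hence $x=0$; thus $\{\theta_\alpha\tens\id\}$ is again jointly faithful, and applying this once in each leg gives the lemma. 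Granting it, $z=0$, so $\cDelta$ is well defined.

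For (2), coassociativity of $\cDelta$ is checked on the range of the surjection $\pi$, where both $(\cDelta\tens\id)\cDelta\comp\pi$ and $(\id\tens\cDelta)\cDelta\comp\pi$ equal $(\pi\tens\pi\tens\pi)$ applied to the two sides of the coassociativity of $\Delta_\GG$; the density (cancellation) conditions are transported from $\GG$ by applying $\pi\tens\pi$ to the density of $\Delta_\GG(\C(\GG))(\I\tens\C(\GG))$ and of its mirror image, using $(\pi\tens\pi)\bigl(\Delta_\GG(x)(\I\tens y)\bigr)=\cDelta(\pi x)(\I\tens\pi y)$. This verifies the axioms of a compact quantum group and defines $\KK_\Lambda$. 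Part (3) is immediate from $\sJ=\bigcap_n\ker\Lambda_n\subseteq\ker\Lambda_1=\ker\Lambda$, which lets $\Lambda$ factor uniquely through $\pi$.

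For (4), the hypotheses are that $\tau$ intertwines the coproducts and $\Lambda=\chi\comp\tau$. The key step is the analogue of the first identity for $\tau$: induction from $(\tau\tens\tau)\comp\Delta_\GG=\Delta_\HH\comp\tau$ gives $\tau^{\tens n}\comp\Delta_\GG^{(n-1)}=\Delta_\HH^{(n-1)}\comp\tau$, whence $\Lambda_n=\chi^{\tens n}\comp\Delta_\HH^{(n-1)}\comp\tau$. Consequently $\ker\tau\subseteq\ker\Lambda_n$ for all $n$, so $\ker\tau\subseteq\sJ=\ker\pi$; since $\tau$ is surjective, $\pi$ factors uniquely as $\pi=\rho\comp\tau$ with $\rho\colon\C(\HH)\to\sS$ a unital $*$-homomorphism, and $\rho$ is surjective because $\pi$ is. Finally, precomposing $(\rho\tens\rho)\comp\Delta_\HH$ and $\cDelta\comp\rho$ with the surjection $\tau$ and using $\rho\comp\tau=\pi$, $(\tau\tens\tau)\comp\Delta_\GG=\Delta_\HH\comp\tau$ and $\cDelta\comp\pi=(\pi\tens\pi)\comp\Delta_\GG$, both reduce to $(\pi\tens\pi)\comp\Delta_\GG$; surjectivity of $\tau$ then gives $(\rho\tens\rho)\comp\Delta_\HH=\cDelta\comp\rho$, so $\rho$ is a morphism of compact quantum groups, as required.
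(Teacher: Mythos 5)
Your proposal is correct and follows essentially the same route as the paper: the convolution identity $(\Lambda_j\tens\Lambda_k)\comp\Delta_\GG=\Lambda_{j+k}$, the inclusion $\sJ\subseteq\ker\Lambda$ for part (3), the inclusion $\ker\tau\subseteq\sJ$ for part (4), and, for part (1), a slice-map argument on the minimal tensor product (the paper cites Wassermann for exactly the property you invoke). The only difference is organizational: you package the key step as a standalone ``joint faithfulness passes to tensor products'' lemma applied to the descended maps $\theta_n$, whereas the paper runs the same double-slicing argument by contradiction directly on $\C(\GG)$, showing that the relevant slices of $\Delta_\GG(x)$ land back in the ideal $\sJ$.
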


\begin{proof}
First note that for any $n,m\in\NN$ we have
\[
\begin{split}
(\Lambda_n\tens\Lambda_m)\comp\Delta&=\bigl((\underbrace{\Lambda\tens\dotsm\tens\Lambda}_n)\tens(\underbrace{\Lambda\tens\dotsm\tens\Lambda}_m)\bigr)
\comp(\Delta^{(n-1)}\tens\Delta^{(m-1)})\comp\Delta\\
&=(\underbrace{\Lambda\tens\dotsm\tens\Lambda}_{n+m})\comp\Delta^{(n+m-1)}=\Lambda_{n+m}.
\end{split}
\]

Now let us take $x\in\sJ$. We will show that
\begin{equation}\label{zero}
(\pi\tens\pi)\Delta_\GG(x)=0.
\end{equation}
To that end, let $v=(\pi\tens\id)\Delta_\GG(x)\in\sS\tens\C(\GG)$. In order to have \eqref{zero} it is enough to show that $(\id\tens\pi)(v)=0$.

Assume to the contrary that $(\id\tens\pi)(v)\neq{0}$. Then there exists a functional $\omega\in\sS^*$ such that
\[
(\omega\tens\id)\bigl((\id\tens\pi)(v)\bigr)\neq{0}.
\]
Or, in other words,
\begin{equation}\label{nonzero}
\pi\bigl((\omega\tens\id)(v)\bigr)\neq{0}
\end{equation}
(cf.~\cite[Section 1.5.4(b)]{SW}). 

Since $x\in\sJ$, for any $n,m\in\NN$
\begin{equation}\label{Jkl}
(\Lambda_n\tens\Lambda_m)\Delta(x)=\Lambda_{n+m}(x)=0.
\end{equation}
Fix $m$ and $\nu\in(\underbrace{\sB\tens\dotsm\tens\sB}_m)^*$ and let
\[
z_m=\bigl(\id\tens[\nu\comp\Lambda_m]\bigr)\Delta(x).
\]
For any $n\in\NN$ we have by \eqref{Jkl}
\[
\Lambda_n(z_m)=(\id\tens\nu)(\Lambda_n\tens\Lambda_m)\Delta(x)=0,
\]
which means that $z_m\in\sJ$, i.e.~$\pi(z_m)=0$. Thus
\[
\begin{split}
0=\omega\bigl(\pi(z_m)\bigr)&=\bigl(\omega\tens[\nu\comp\Lambda_m]\bigr)\Delta(x)\\
&=(\omega\tens\nu)(\id\tens\Lambda_m)\Delta(x)\\
&=\nu\bigl(\Lambda_m((\omega\tens\id)\Delta(x))\bigr).
\end{split}
\]
Since this is true for all $\nu$, we get
\[
\Lambda_m\bigl((\omega\tens\id)\Delta(x)\bigr)=0,\qquad{m}\in\NN.
\]
But his means that $\pi\bigl((\omega\tens\id)\Delta(x)\bigr)=0$ which contradicts \eqref{nonzero}. This proves \eqref{zero}.

It is now easy to see that there exists a unique comultiplication $\cDelta\colon\sS\to\sS\tens\sS$ such that
\[
\cDelta\comp\pi=(\pi\tens\pi)\comp\Delta_\GG.
\]
Clearly $\cDelta$ is coassociative (simply apply $(\pi\tens\pi\tens\pi)$ to both sides of $(\Delta_\GG\tens\id)\comp\Delta_\GG=(\id\tens\Delta_\GG)\comp\Delta_\GG$). Moreover, since $\pi$ is surjective and
\[
\begin{array}{r@{\;=\;}l}
(\pi\tens\pi)\bigl(\Delta(a)(\I\tens{b})\bigr)&\cDelta\bigl(\pi(a)\bigr)\bigl(\I\tens\pi(b)\bigr),\\
(\pi\tens\pi)\bigl((a\tens\I)\Delta(b)\bigr)&\bigl(\I\tens\pi(a)\bigr)\cDelta\bigl(\pi(b)\bigr),
\end{array}
\qquad{a},b\in\sA,
\]
the density conditions of \cite[Definition 2.1]{cqg} are satisfied and $\cDelta$ defines on $\sS$ the structure of an algebra of functions on a compact quantum group.

Observe now that since $\ker\Lambda=\ker\Lambda_1\subset\sJ$, there exists a unique $\theta\colon\sS\to\sB$ such that \eqref{Ltp} holds.

Now let $\HH$ be a compact quantum subgroup of $\GG$ with corresponding surjection $\tau\colon\C(\GG)\to\C(\HH)$ such that
\[
\Lambda=\chi\comp\tau
\]
for some $\chi\colon\C(\HH)\to\sB$.

Take $y\in\ker\tau$. Then for any $n \in \NN$ we have
\[
\begin{split}
\Lambda_n(y)=(\underbrace{\Lambda\tens\dotsm\tens\Lambda}_n)\Delta_\GG^{(n-1)}(y)
&=(\underbrace{\chi\tens\dotsm\tens\chi}_n)(\underbrace{\tau\tens\dotsm\tens\tau}_n)\Delta_\GG^{(n-1)}(y)\\
&=(\underbrace{\chi\tens\dotsm\tens\chi}_n)\Delta_\HH^{(n-1)}\bigl(\tau(y)\bigr)=0
\end{split}
\]
because $\tau$ is a compact quantum group morphism. It follows that for each $n \in \NN$ we have $\ker\tau\subset\ker\Lambda_n$, so $\ker\tau\subset\sJ$. Therefore there is a unique surjective $\rho\colon\C(\HH)\to\sS$ such that $\pi=\rho\comp\tau$. The map $\rho$ is a compact quantum group morphism, as for any $z\in\C(\HH)$ we can find a $z'\in\C(\GG)$ such that $z=\tau(z')$ and thus
\[
\begin{split}
(\rho\tens\rho)\Delta_\HH(z)&=(\rho\tens\rho)\Delta_\HH\bigl(\tau(z')\bigr)\\
&=(\rho\tens\rho)(\tau\tens\tau)\Delta_\GG(z')\\
&=(\pi\tens\pi)\Delta_\GG(z')\\
&=\cDelta\bigl(\pi(z')\bigr)\\
&=\cDelta\bigl(\rho(\tau(z'))\bigr)=\cDelta\bigl(\rho(z)\bigr)
\end{split}
\]
so that $(\rho\tens\rho)\comp\Delta_\HH=\cDelta\comp\rho$.
\end{proof}

Remark here that apart from the fact that we remain here all the time on the level of \cst-algebras, there is one other notable difference with the general Hopf algebra setup, observed already in \cite{HopfIdempotents}: when constructing $\sJ$ we only need to take care of the iterated `convolutions' of the map $\Lambda$, so that, as opposed to the situation in \cite{BanicaBichon}, the antipode does not play any role. In the next corollary we note that the same construction works if we start from a morphism taking values into a not-necessarily unital $\cst$-algebra.

\begin{corollary}\label{CorHopfImage}
\sloppy
Let $\GG$ be a compact quantum group, $\sB$ a (not necessarily unital) \cst-algebra and let $\Lambda\in\Mor\bigl(\C(\GG),\sB\bigr)$. Then there exists a unique \cst-algebra $\sS$ equipped with a surjective unital $*$-homomorphism $\pi\colon\C(\GG)\to\sS$ such that
\begin{enumerate}
\item there exists a $\cDelta\colon\sS\to\sS\tens\sS$ such that $\cDelta\comp\pi=(\pi\tens\pi)\comp\Delta_\GG$.
\item The pair $(\sS,\cDelta)$ gives rise to a compact quantum group $\KK_\Lambda$.
\item The map $\Lambda$ factorizes uniquely as $\Lambda=\theta\comp\pi$ for a certain $\theta\in\Mor(\sS,\sB)$.
\item If $\HH$ is a compact quantum subgroup of $\GG$ with corresponding surjective quantum group morphism $\tau\colon\C(\GG)\to\C(\HH)$ such that $\Lambda$ factorizes as $\Lambda=\chi\comp\tau$ for some $\chi\in\Mor\bigl(\C(\HH),\sB\bigr)$ then there exists a unique surjective map $\rho\colon\C(\HH)\to\sS$ such that $\pi=\rho\comp\tau$. This $\rho$ is a compact quantum group morphism.
\end{enumerate}
\end{corollary}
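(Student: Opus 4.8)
The plan is to deduce the corollary directly from Theorem \ref{HopfImage} by passing to the multiplier algebra. The key observation is that, because $\C(\GG)$ is unital, a morphism $\Lambda\in\Mor\bigl(\C(\GG),\sB\bigr)$ is the same thing as a \emph{unital} $*$-homomorphism $\Lambda\colon\C(\GG)\to\M(\sB)$ into the (automatically unital) multiplier algebra. Indeed $\Lambda(\I)$ is a projection $p\in\M(\sB)$, and $\Lambda\bigl(\C(\GG)\bigr)\sB\subseteq p\sB$; since $p\sB$ is closed, the density condition defining a morphism forces $p\sB=\sB$, hence $p=\I_{\M(\sB)}$. Thus I may and will regard $\Lambda$ as a unital $*$-homomorphism into the unital \cst-algebra $\M(\sB)$.

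First I would apply Theorem \ref{HopfImage} verbatim to this unital $*$-homomorphism $\Lambda\colon\C(\GG)\to\M(\sB)$, the codomain now being unital. Setting $\Lambda_n=\Lambda^{\tens n}\comp\Delta^{(n-1)}\colon\C(\GG)\to\M(\sB)^{\tens n}$ (using that the minimal tensor product of $*$-homomorphisms is again a $*$-homomorphism), $\sJ=\bigcap_{n}\ker\Lambda_n$, $\sS=\C(\GG)/\sJ$ and $\pi$ the quotient map, points (i), (ii) and (iv) of the corollary follow at once from the corresponding points of Theorem \ref{HopfImage}. In particular $\cDelta$ exists, $(\sS,\cDelta)$ is the compact quantum group $\KK_\Lambda$, and for any quantum subgroup $\HH$ with $\Lambda=\chi\comp\tau$ (where $\chi\in\Mor\bigl(\C(\HH),\sB\bigr)$ is likewise read as a unital $*$-homomorphism into $\M(\sB)$, since $\C(\HH)$ is unital) one obtains the unique surjective quantum group morphism $\rho\colon\C(\HH)\to\sS$ with $\pi=\rho\comp\tau$.

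The only point requiring an additional (small) argument is (iii). Theorem \ref{HopfImage} supplies a unital $*$-homomorphism $\theta\colon\sS\to\M(\sB)$ with $\Lambda=\theta\comp\pi$, and I must upgrade this to a morphism $\theta\in\Mor(\sS,\sB)$, i.e.\ verify the density (Podle\'s) condition $\overline{\theta(\sS)\sB}=\sB$. But since $\pi$ is surjective, $\theta(\sS)=\theta\bigl(\pi(\C(\GG))\bigr)=\Lambda\bigl(\C(\GG)\bigr)$, so that $\theta(\sS)\sB=\Lambda\bigl(\C(\GG)\bigr)\sB$ is dense in $\sB$ \emph{precisely because} $\Lambda$ was assumed to be a morphism. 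Hence $\theta\in\Mor(\sS,\sB)$ and the factorization $\Lambda=\theta\comp\pi$ now holds in the category of \cst-algebras; its uniqueness is immediate from the surjectivity of $\pi$.

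Finally, for the uniqueness of the pair $(\sS,\pi)$ asserted in the corollary, I would note that properties (i)--(iii) themselves exhibit $\sS$ as a compact quantum subgroup of $\GG$ via $\pi$ through which $\Lambda$ factorizes, so that property (iv) is applicable with $\HH=\KK_\Lambda$. Given a second pair $(\sS',\pi')$ enjoying the same properties, applying (iv) in both directions yields surjective quantum group morphisms $\rho\colon\sS\to\sS'$ and $\rho'\colon\sS'\to\sS$ with $\pi'=\rho\comp\pi$ and $\pi=\rho'\comp\pi'$; surjectivity of $\pi$ and $\pi'$ then forces $\rho'\comp\rho=\id$ and $\rho\comp\rho'=\id$, so $(\sS,\pi)\cong(\sS',\pi')$. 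I expect no genuine obstacle anywhere: the entire content of the corollary is the dictionary between morphisms into $\sB$ and unital $*$-homomorphisms into $\M(\sB)$, and the single place where this dictionary must be applied \emph{by hand} rather than simply invoked is the verification of the density condition for $\theta$ in point (iii).
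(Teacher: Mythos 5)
Your proposal is correct and is exactly the paper's approach: the paper's entire proof reads ``Replace $\sB$ by $\M(\sB)$ and use Theorem \ref{HopfImage}.'' You have simply filled in the details the paper leaves implicit (unitality of $\Lambda$ into $\M(\sB)$, the Podle\'s condition for $\theta$, and the uniqueness of the pair $(\sS,\pi)$ via property (4)), all of which are verified correctly.
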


\begin{proof}
Replace $\sB$ by $\M(\sB)$ and use Theorem \ref{HopfImage}.
\end{proof}

\begin{definition}\label{DefHI}
Let $\GG$ be a compact quantum group, $\sB$ a \cst-algebra and $\Lambda\in\Mor\bigl(\C(\GG),\sB\bigr)$. The compact quantum group  $\KK_\Lambda$ constructed from this data in Corollary \ref{CorHopfImage} is called the \emph{Hopf image} of $\Lambda$. 
\end{definition}

\begin{remark}\label{RemHI}
Let $\GG$, $\sB$ and $\Lambda$ be as in Corollary \ref{CorHopfImage}.
\begin{enumerate}
\item\label{RemHI1} If $\sB$ is commutative then $\KK_\Lambda$ is a classical compact group. Indeed, commutativity of $\sB$ implies that the commutator ideal of $\C(\GG)$ is contained in $\ker(\underbrace{\Lambda\tens\dotsm\tens\Lambda}_n)\comp\Delta_\GG^{(n-1)}$ for all $n \in \NN$. It follows that $\C(\KK_\Lambda)$ is commutative.
\item\label{RemHI2} If $\sB$ is equipped with a comutiplication $\Delta_\sB$ such that $(\sB,\Delta_\sB)$ is the algebra of continuous functions on a compact quantum group $\HH$ and $\Lambda$ is a surjective compact quantum group morphism then $\KK_\Lambda$ is isomorphic to $\HH$. This easily follows from the fact that in this situation we have
\[
\ker\Lambda_n=\ker\Lambda
\]
for all $n\in\NN$.
\end{enumerate}
\end{remark}

Theorem \ref{HopfImage} was recently applied in \cite{Simeng} to show the following result: suppose $\GG$ is a compact quantum group, $\sA$ is a unital $*$-algebra with a faithful state $\phi$ and let $\Lambda:\Pol(\GG)\to \sA$ be a unital $*$-algebra homomorphism. Define $\omega = \phi \circ \Lambda$ and put $\tilde{\omega} = \lim_{n \to \infty} \frac{1}{n} \sum_{k=1}^n \omega^{\star k}$. Then $\pi$ is \emph{inner faithful} (i.e.\ the Hopf image of $\Lambda$ is $\GG$) if and only if $\tilde{\omega}= h_{\GG}$.

\section{Quantum subgroup generated by a family of quantum subgroups}
In this section we use the results of the last one to provide a construction of the quantum subgroup generated by a family of quantum subgroups. Note that very recently an alternative (but equivalent) approach to this concept was introduced, and very successfully applied, by Brannan, Collins and Vergnioux in \cite{BCV}.

Let $\{\HH_i\}_{i\in{I}}$ be a family of closed quantum subgroups of a compact quantum group $\GG$ with corresponding surjections
\[
\pi_i\colon\C(\GG)\longrightarrow\C(\HH_i),\qquad{i}\in{I}.
\]
Denote by $\sB$ the direct sum $\bigoplus\limits_{i\in{I}}\C(\HH_i)$ and let $\Lambda\colon\C(\GG)\to\M(\sB)$ by
\[
\Lambda(x)=\bigoplus_{i\in{I}}\pi_i(x)
\]
(clearly $\Lambda\in\Mor\bigl(\C(\GG),\sB\bigr)$). As declared in Definition \ref{DefHI} we denote by $\KK_\Lambda$ the compact quantum group obtained through the Hopf image construction from $\Lambda$. We have the surjective compact quantum group morphism $\pi\colon\C(\GG)\to\C(\KK_\Lambda)$ and $\theta\in\Mor\bigl(\C(\KK_\Lambda),\sB\bigr)$ such that
\[
\Lambda=\theta\comp\pi.
\]
For each $i$ let $\mathrm{p}_i$ be the canonical projection $\sB\to\C(\HH_i)$ and let $\theta_i=\mathrm{p}_i\comp\theta\colon\C(\KK_\Lambda)\to\C(\HH_i)$. Then $\theta_i$ identifies $\HH_i$ as a subgroup of $\KK_\Lambda$: for any $z\in\C(\KK_\Lambda)$ there is a $z'\in\C(\GG)$ such that $z=\pi(z')$. Therefore
\[
\begin{split}
(\theta_i\tens\theta_i)\Delta_{\KK_\Lambda}(z)&=(\theta_i\tens\theta_i)(\pi\tens\pi)\Delta_\GG(z')\\
&=(\mathrm{p}_i\tens\mathrm{p}_i)\Lambda\tens\Lambda)\Delta_\GG(z')\\
&=(\pi_i\tens\pi_i)\Delta_\GG(z')=\Delta_{\HH_i}(z).
\end{split}
\]
It is also easy to see that each $\theta_i$ is surjective (although $\theta$ is not) and $\pi_i$ factorizes as $\pi_i=\theta_i\comp\pi$.

\sloppy
Moreover $\KK_\Lambda$ can be described as the smallest (closed) subgroup of $\GG$ which contains the subgroups $\{\HH_i\}_{i\in{I}}$ in the sense that if $\HH$ is a quantum subgroup of $\GG$ with corresponding morphism $\tau\colon\C(\GG)\to\C(\HH)$ such that for each $i$ the map $\pi_i$ factorizes as
\[
\pi_i=\chi_i\comp\tau
\]
for some $\chi_i\colon\C(\HH)\to\C(\HH_i)$ then $\KK_\Lambda$ is a subgroup of $\HH$. This follows from the fact that putting
\[
\chi\colon\C(\HH)\ni{x}\longmapsto\bigoplus_{i\in{I}}\chi_i(x)\in\M(\sB)
\]
we define a morphism from $\C(\HH)$ to $\sB$ which obviously satisfies $\Lambda=\chi\comp\tau$. Thus we can use the universal property of the Hopf image. Note that this description guarantees indeed that if $\GG$ is a classical compact group with subgroups $H_i$, then $\KK_{\Lambda}$ constructed above is indeed (isomorphic to) the smallest closed subgroup generated by $H_i$.

\begin{definition}
The closed quantum subgroup $\KK_\Lambda$ obtained above from the family of quantum subgroups $\{\HH_i\}_{i\in{I}}$ of $\GG$ is called the quantum subgroup of $\GG$ \emph{generated by the subgroups $\{\HH_i\}_{i\in{I}}$}.
\end{definition}

\begin{remark}
\noindent
\begin{enumerate}
\item If the family $\{\HH_i\}_{i\in{I}}$ is trivial in the sense that all $\HH_i$ are equal (this means that also all $\pi_i$ are the same map) then the quantum subgroup generated by $\{\HH_i\}_{i\in{I}}$ coincides with $\HH_{i_0}$ for any $i_0\in{I}$. This is a clear consequence of its universal property.

Similarly one can show that if there are repetitions in the collection $\{\HH_i\}_{i\in{I}}$ (and $\{\pi_i\}_{i\in{I}}$) then they can be appropriately removed from the list.
\item Let $\{\HH_i\}_{i\in{I}}$ and $\{\HH_j\}_{j\in{J}}$ be two families of closed subgroups of $\GG$ and let
\[
\begin{split}
\Lambda&\in\Mor\bigl(\C(\GG),\sB_I\bigr)\\
\Gamma&\in\Mor\bigl(\C(\GG),\sB_J\bigr),
\end{split}
\]
where
\[
\sB_I=\bigoplus_{i\in{I}}\C(\HH_i)\quad\text{and}\quad\sB_J=\bigoplus_{j\in{J}}\C(\HH_j),
\]
be corresponding morphisms (as above). Denote by $\KK_I$ and $\KK_J$ the quantum subgroups of $\GG$ generated by $\{\HH_i\}_{i\in{I}}$ and $\{\HH_j\}_{j\in{J}}$ respectively. Then the quantum subgroup of $\GG$ generated by $\{\HH_i\}_{i\in{I}}\cup\{\HH_j\}_{j\in{J}}$ is canonically isomorphic to the quantum subgroup of $\GG$ generated by $\{\KK_I,\KK_J\}$. This is, again, an easy consequence of the universal property of the quantum subgroup generated by a given family.

Moreover, one can easily generalize this fact to not necessarily finite collections of families of subgroups of $\GG$.
\item Let $\Gamma$ be a discrete group and suppose that $\GG = \widehat{\Gamma}$, so that $\C(\GG) = \cst(\Gamma)$. It is then well-known (see for example \cite{dkss}) that subgroups of $\GG$ correspond to \emph{normal} subgroups of $\Gamma$, in the following sense: if $\HH$ is a quantum subgroup of $\GG$, then there exists $S \triangleleft \Gamma$ so that $\HH = \widehat{\Gamma/S}$, and the associated morphism $\pi \in \Mor(\cst(\Gamma), \cst (\Gamma/S))$ is induced by the quotient map $\Gamma \mapsto \Gamma/S$. Further given a family of quantum subgroups $\{\HH_i\}_{i\in{I}}$ of $\widehat{\Gamma}$ (with corresponding normal subgroups $S_i \triangleleft \Gamma$) we can easily check, for example using the universal properties, that the quantum subgroup generated by $\{\HH_i\}_{i\in{I}}$ is $\widehat{\Gamma/S}$, where $S= \bigcap_{i \in I} S_i$.
\item If $\GG$ is a compact quantum group, and $\{\HH_i\}_{i\in{I}}$ is a collection of its \emph{classical} subgroups (i.e.\ each $\HH_i$ is a quantum subgroup of $\GG$ and $\C(\HH_i)$ is commutative), then the quantum subgroup generated by $\{\HH_i\}_{i\in{I}}$ is also classical. This follows from Remark \ref{RemHI1}. The quantum subgroup generated by duals of classical groups need not be a dual of a classical group (as already classically the subgroup generated by two abelian groups need not be abelian).

\end{enumerate}
\end{remark}

We finish this section by sketching an alternative, dual description of the quantum subgroup generated by $\{\HH_i\}_{i\in{I}}$. Recall that if $\HH$ is a quantum subgroup of $\GG$, and $\pi\colon\C(\GG)\to\C(\HH)$ is the corresponding surjective morphism, then there exists a unique \emph{dual} morphism $\widehat{\pi}\colon\ell^{\infty}(\widehat{\HH})\to\ell^{\infty}(\widehat{\GG})$, where $\ell^{\infty}(\widehat{\HH})$ and $\ell^{\infty}(\widehat{\GG})$ denote respectively the von Neumann algebras of bounded functions on the dual, discrete quantum groups (see \cite{dkss} for the details).

\begin{theorem} \label{generatedsubgroupdual}
Let $\GG$ be a compact quantum group with the family of quantum subgroups $\{\HH_i\}_{i\in{I}}$. Consider the dual morphisms $\widehat{\pi}_i\colon\ell^{\infty} (\widehat{\HH_i})\to\ell^{\infty}(\widehat{\GG})$. The smallest von Neumann algebra invariant under the coproduct generated by $\widehat{\pi}_i\bigl(\ell^{\infty}(\widehat{\HH_i})\bigr) $ inside $\ell^{\infty}(\widehat{\GG})$ is isomorphic to $\widehat{\pi}\bigl(\ell^{\infty}(\widehat{\KK})\bigr)$, where $\KK$ is the quantum subgroup generated by $\{\HH_i\}_{i\in{I}}$ and $\pi\colon\C(\GG)\to\C(\KK)$ is the morphism identifying $\KK$ as a quantum subgroup of $\GG$.
\end{theorem}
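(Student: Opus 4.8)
The plan is to deduce the statement from the duality, established in \cite{dkss}, between closed quantum subgroups of $\GG$ and those von Neumann subalgebras of $\ell^\infty(\widehat{\GG})$ that are invariant under the coproduct $\widehat{\Delta}$, the unitary antipode $\widehat{R}$ and the scaling group $(\widehat{\tau}_t)_{t\in\mathbb{R}}$ of the discrete quantum group $\widehat{\GG}$ (the Baaj--Vaes subalgebras). Under the correspondence $\HH\mapsto\widehat{\pi}_\HH\bigl(\ell^\infty(\widehat{\HH})\bigr)$ a larger subgroup yields a larger subalgebra, and an intersection of Baaj--Vaes subalgebras is again Baaj--Vaes, so this is an inclusion-preserving bijection of complete lattices. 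Writing $N$ for the smallest $\widehat{\Delta}$-invariant von Neumann subalgebra of $\ell^\infty(\widehat{\GG})$ containing all the $\widehat{\pi_i}\bigl(\ell^\infty(\widehat{\HH_i})\bigr)$, I would prove $N=\widehat{\pi}\bigl(\ell^\infty(\widehat{\KK})\bigr)$ by establishing the two inclusions separately; the heart of the matter is that $N$ is in fact a Baaj--Vaes subalgebra and hence corresponds to a genuine quantum subgroup.

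For the inclusion $N\subseteq\widehat{\pi}\bigl(\ell^\infty(\widehat{\KK})\bigr)$ I would invoke functoriality of the passage to dual morphisms. Recall from the construction preceding the theorem that each $\HH_i$ is realised as a subgroup of $\KK$ through a surjective morphism $\theta_i\colon\C(\KK)\to\C(\HH_i)$ with $\pi_i=\theta_i\comp\pi$. Dualising gives $\widehat{\pi_i}=\widehat{\pi}\comp\widehat{\theta_i}$, so that $\widehat{\pi_i}\bigl(\ell^\infty(\widehat{\HH_i})\bigr)\subseteq\widehat{\pi}\bigl(\ell^\infty(\widehat{\KK})\bigr)$ for every $i$. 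Since $\widehat{\pi}$ intertwines the coproducts, its image $\widehat{\pi}\bigl(\ell^\infty(\widehat{\KK})\bigr)$ is $\widehat{\Delta}$-invariant, hence is one of the von Neumann subalgebras over which the infimum defining $N$ is taken; minimality of $N$ then yields the inclusion.

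The reverse inclusion, and the main obstacle, is to see that $N$ comes from a subgroup at all. Here I would argue that $N$, a priori only $\widehat{\Delta}$-invariant, is automatically invariant under $\widehat{R}$ and each $\widehat{\tau}_t$. Indeed $\widehat{R}$ is an involutive anti-automorphism satisfying $\widehat{\Delta}\comp\widehat{R}=\sigma\comp(\widehat{R}\tens\widehat{R})\comp\widehat{\Delta}$ (with $\sigma$ the flip), so $\widehat{R}(N)$ is again a $\widehat{\Delta}$-invariant von Neumann subalgebra; moreover each generating algebra $\widehat{\pi_i}\bigl(\ell^\infty(\widehat{\HH_i})\bigr)$, coming from a subgroup, is Baaj--Vaes and thus fixed by $\widehat{R}$, so $\widehat{R}(N)$ still contains all of them. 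Minimality gives $N\subseteq\widehat{R}(N)$, and applying $\widehat{R}$ once more forces $\widehat{R}(N)=N$; the identical argument with the automorphisms $\widehat{\tau}_t$, which satisfy $\widehat{\Delta}\comp\widehat{\tau}_t=(\widehat{\tau}_t\tens\widehat{\tau}_t)\comp\widehat{\Delta}$, shows $\widehat{\tau}_t(N)=N$. Thus $N$ is a Baaj--Vaes subalgebra, so by the correspondence of \cite{dkss} there is a closed quantum subgroup $\KK'$ of $\GG$ with $N=\widehat{\pi'}\bigl(\ell^\infty(\widehat{\KK'})\bigr)$. Because $N$ contains every $\widehat{\pi_i}\bigl(\ell^\infty(\widehat{\HH_i})\bigr)$ and the correspondence is inclusion-preserving, each $\HH_i$ is a subgroup of $\KK'$; the universal property of the subgroup generated by $\{\HH_i\}_{i\in I}$ then gives that $\KK$ is a subgroup of $\KK'$, whence $\widehat{\pi}\bigl(\ell^\infty(\widehat{\KK})\bigr)\subseteq\widehat{\pi'}\bigl(\ell^\infty(\widehat{\KK'})\bigr)=N$. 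Combining the two inclusions completes the proof. The delicate point throughout is the verification that minimality propagates the extra symmetries $\widehat{R}$ and $\widehat{\tau}_t$ to $N$, which is exactly what distinguishes a subalgebra coming from a subgroup from an arbitrary coproduct-invariant one.
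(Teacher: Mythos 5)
Your proposal is correct, and its endgame (functoriality of dual morphisms plus the universal property of the generated subgroup) coincides with the paper's; the difference lies in how the crucial fact --- that the minimal coproduct-invariant von Neumann subalgebra $N$ comes from a quantum subgroup at all --- is obtained. The paper gets this in one stroke by citing \cite[Theorem 3.1]{NeY}, which asserts that for the discrete dual of a compact quantum group \emph{every} coproduct-invariant von Neumann subalgebra of $\ell^{\infty}(\widehat{\GG})$ is of the form $\widehat{\pi}\bigl(\ell^{\infty}(\widehat{\KK})\bigr)$ for a closed quantum subgroup $\KK$ of $\GG$; in other words, in the discrete setting coproduct-invariance automatically entails the full Baaj--Vaes property. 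You instead establish the Baaj--Vaes property only for the particular algebra $N$, via the minimality trick: $\widehat{R}(N)$ and $\widehat{\tau}_t(N)$ are again coproduct-invariant and still contain the generating algebras (because dual morphisms intertwine unitary antipodes and scaling groups), so minimality forces $\widehat{R}(N)=N$ and $\widehat{\tau}_t(N)=N$, after which the correspondence of \cite{dkss} between Baaj--Vaes subalgebras and Vaes-closed subgroups (together with the fact, also from \cite{dkss}, that Woronowicz-closed and Vaes-closed subgroups coincide for compact $\GG$) produces the subgroup $\KK'$. Your route is more self-contained --- it needs only \cite{dkss}, on which the paper already relies for the notion of dual morphism, rather than the stronger Neshveyev--Yamashita theorem --- and it is more robust: it would survive in settings where coproduct-invariance alone is not known to imply the Baaj--Vaes property, precisely because it exploits that the generating subalgebras come from honest subgroups. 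What it does not give is the stronger statement behind the paper's citation, namely that \emph{arbitrary} coproduct-invariant subalgebras of $\ell^{\infty}(\widehat{\GG})$ correspond to subgroups; the appeal to \cite{NeY} buys that generality (and a two-line proof) at the price of a heavier external input. The only points you gloss over --- that the correspondence is inclusion-preserving in both directions, so that containment of the dual images yields compatible factorizations $\pi_i=\chi_i\comp\tau'$ feeding the universal property --- are glossed over at the same level in the paper's own proof (``use the universal properties''), so this is not a gap relative to the paper's standard of rigour.
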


\begin{proof}

Denote the von Neumann algebra introduced in the formulation of the theorem by $\sM$. It follows from \cite[Theorem 3.1]{NeY} that $\sM=\widehat{\pi}\bigl(\ell^{\infty}(\widehat{\KK})\bigr)$, where $\KK$ is a quantum subgroup of $\GG$ and $\pi\colon\C(\GG)\to\C(\KK)$ is the respective quantum group morphism. It now suffices to observe that the construction of the dual morphism is functorial, by which we mean that if for example $\HH_1\subset \HH_2 \subset \HH_3$ are inclusions of compact quantum groups, then we have  $\widehat{\pi}_{1,3} =\widehat{\pi}_{2,3} \circ \widehat{\pi}_{1,2}$, and use the universal properties.
\end{proof}

\section{Quantum groups generated by quantum families of invertible maps}\label{Sgen}

Throughout this section we let $(\sM,\phi)$ be a finite dimensional \cst-algebra with a faithful state. Wang's quantum automorphism group $\Aut(\sM,\phi)$ of this pair will be denoted by $\GG$, and its action on $\sM$ by $\alpha$. We show how we can use the idea of a Hopf image to define the quantum group generated by a given family of invertible maps.

\begin{theorem}\label{genbeta}
Let $\sB$ be a \cst-algebra and let $\beta\in\Mor(\sM,\sM\tens\sB)$ be a quantum family of invertible maps preserving $\phi$. Then there exists a compact quantum group $\KK_\beta$ equipped with a $\phi$-preserving action on $\sM$, $\Bar{\beta}\colon\sM\to\sM\tens\C(\KK_\beta)$ and a map $\theta\in\Mor\bigl(\C(\KK_\beta),\sB\bigr)$ such that
\begin{equation}\label{bbb}
\beta=(\id\tens\theta)\comp\Bar{\beta}
\end{equation}
determined uniquely by the property that if $\HH$ is a compact quantum group with a faithful $\phi$-preserving action $\gamma\colon\sM\to\sM\tens\C(\HH)$ and a map $\chi\in\Mor\bigl(\C(\HH),\sB\bigr)$ such that
\[
\beta=(\id\tens\chi)\comp\gamma
\]
then there exists a unique surjective map $\rho\colon\C(\HH)\to\C(\KK_\beta)$ such that $\chi=\theta\comp\rho$.
\end{theorem}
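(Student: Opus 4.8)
The plan is to realize $\KK_\beta$ as the Hopf image of a suitable morphism out of $\C(\GG)=\C(\Aut(\sM,\phi))$, thereby reducing Theorem~\ref{genbeta} to Corollary~\ref{CorHopfImage}. First I would use the universal property of $\GG$ from Theorem~\ref{zeroth}\eqref{first}: since $\beta\in\Mor(\sM,\sM\tens\sB)$ is a quantum family of invertible maps preserving $\phi$, Remark~\ref{RemInv}(5) (the universal property phrased for not-necessarily-unital $\sB$) produces a unique $\Lambda\in\Mor\bigl(\C(\GG),\sB\bigr)$ with $\beta=(\id\tens\Lambda)\comp\alpha$. I then apply Corollary~\ref{CorHopfImage} to this $\Lambda$ to obtain the Hopf image $\KK_\beta=\KK_\Lambda$, with its surjective quantum group morphism $\pi\colon\C(\GG)\to\C(\KK_\beta)$, the factorization $\Lambda=\theta\comp\pi$ for $\theta\in\Mor\bigl(\C(\KK_\beta),\sB\bigr)$, and the comultiplication $\cDelta$ making $\KK_\beta$ a compact quantum group.

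Next I would define the action $\Bar\beta\colon\sM\to\sM\tens\C(\KK_\beta)$ by $\Bar\beta=(\id\tens\pi)\comp\alpha$. The fact that this is a $\phi$-preserving action satisfying the Podle\'s condition should follow because $\pi$ is a surjective quantum group morphism: $\phi$-invariance of $\alpha$ passes through $\id\tens\pi$, and the coassociativity/action identity $(\Bar\beta\tens\id)\comp\Bar\beta=(\id\tens\cDelta)\comp\Bar\beta$ follows by applying $\id\tens\pi\tens\pi$ to the corresponding identity for $\alpha$ and using $\cDelta\comp\pi=(\pi\tens\pi)\comp\Delta_\GG$. The faithfulness of $\Bar\beta$ (that $\C(\KK_\beta)$ is generated by the associated matrix coefficients) follows from Remark~\ref{RemInv}(2) for $\alpha$ together with surjectivity of $\pi$. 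Formula \eqref{bbb} is then immediate: $(\id\tens\theta)\comp\Bar\beta=(\id\tens\theta)(\id\tens\pi)\comp\alpha=(\id\tens[\theta\comp\pi])\comp\alpha=(\id\tens\Lambda)\comp\alpha=\beta$.

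For the universal property, suppose $\HH$ is a compact quantum group with a faithful $\phi$-preserving action $\gamma$ and a map $\chi\in\Mor\bigl(\C(\HH),\sB\bigr)$ satisfying $\beta=(\id\tens\chi)\comp\gamma$. Since $\gamma$ is itself a $\phi$-preserving quantum family of invertible maps, Remark~\ref{RemInv}(5) gives a unique $\tau\in\Mor\bigl(\C(\GG),\C(\HH)\bigr)$ with $\gamma=(\id\tens\tau)\comp\alpha$, and by Theorem~\ref{zeroth}\eqref{third} this $\tau$ intertwines the coproducts; faithfulness of $\gamma$ forces $\tau$ to be surjective, so $\HH$ is a quantum subgroup of $\GG$. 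Combining the two factorizations, $\beta=(\id\tens[\chi\comp\tau])\comp\alpha$, and uniqueness in Remark~\ref{RemInv}(5) yields $\Lambda=\chi\comp\tau$. Now part~(4) of Corollary~\ref{CorHopfImage} applies directly: it produces a unique surjective quantum group morphism $\rho\colon\C(\HH)\to\C(\KK_\beta)$ with $\pi=\rho\comp\tau$. It remains to check $\chi=\theta\comp\rho$, which I would verify using faithfulness of $\gamma$: on the generating set $\{(\omega\tens\id)\gamma(m)\}$ one computes $(\theta\comp\rho)\bigl((\omega\tens\id)\gamma(m)\bigr)=(\omega\tens\id)(\id\tens\theta\rho)\gamma(m)$ and pushes everything back through the factorizations to land on $\chi\bigl((\omega\tens\id)\gamma(m)\bigr)$, using $\rho\comp\tau=\pi$ and $\theta\comp\pi=\Lambda=\chi\comp\tau$.

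The main obstacle I anticipate is the last compatibility check $\chi=\theta\comp\rho$ together with the uniqueness clause of the asserted universal property. The factorization data $(\rho,\theta,\pi,\tau)$ interlock in several ways, and one must confirm that the identity $\chi=\theta\comp\rho$ holds on all of $\C(\HH)$, not merely on the image of $\tau$ — here surjectivity of $\tau$ (hence of $\gamma$'s faithfulness) is exactly what lets the identity propagate from the generators. Establishing uniqueness of $\rho$ is cleaner, since it is forced by $\pi=\rho\comp\tau$ and surjectivity of $\tau$, but I would want to state carefully that the whole package $(\KK_\beta,\Bar\beta,\theta)$ is determined up to canonical isomorphism, tracing this back to the uniqueness built into Corollary~\ref{CorHopfImage}.
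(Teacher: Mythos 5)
Your proposal is correct and takes essentially the same route as the paper's own proof: factor $\beta$ through the universal action $\alpha$ of $\GG=\Aut(\sM,\phi)$ to get $\Lambda\in\Mor\bigl(\C(\GG),\sB\bigr)$, set $\KK_\beta$ equal to the Hopf image of $\Lambda$ with $\Bar{\beta}=(\id\tens\pi)\comp\alpha$, and then transport the universal property of Corollary \ref{CorHopfImage}(4) by producing the surjective $\tau$ with $\gamma=(\id\tens\tau)\comp\alpha$ and $\Lambda=\chi\comp\tau$. Your explicit verification that $\chi=\theta\comp\rho$ follows from $\pi=\rho\comp\tau$ and surjectivity of $\tau$, and that $\tau$ intertwines coproducts (via Theorem \ref{zeroth}(3)), spells out steps the paper treats as immediate.
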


\begin{proof}
Since $\beta$ preserves $\phi$ and satisfies Podle\'s condition, there exists a unique $\Lambda\in\Mor\bigl(\C(\GG),\sB)$ such that
\[
\beta=(\id\tens\Lambda)\comp\alpha.
\]
Let $\KK_\beta$ be the Hopf image associated with $\Lambda$. In particular we have a surjective map $\pi\colon\C(\GG)\to\C(\KK_\beta)$ intertwining the respective coproducts and $\theta\in\Mor\bigl(\C(\KK_\beta),\sB\bigr)$ such that
\[
\Lambda=\theta\comp\pi.
\]
Clearly $\Bar{\beta}=(\id\tens\pi)\comp\alpha$ is an action of $\KK_\beta$ on $\sM$, $\phi$ is invariant for $\Bar{\beta}$ and \eqref{bbb} holds.

Let us check the universal property of $\KK_\beta$: let $\HH$ and $\gamma$ be as in the formulation of the theorem. Since $\phi$ is invariant for $\gamma$, there exists a unique $\tau\colon\C(\GG)\to\C(\HH)$ such that
\[
\gamma=(\id\tens\tau)\comp\alpha
\]
and $\tau$ is surjective due to faithfulness of $\gamma$. Now assume, as in the formulation, that we also have a $\chi\in\Mor\bigl(\C(\HH),\sB\bigr)$ such that
\[
\beta=(\id\tens\chi)\comp\gamma.
\]
Then obviously $\chi\comp\tau=\Lambda$ due to the universal property of $\GG$ (the map $\Lambda$ is unique such that $(\id\tens\Lambda)\comp\alpha=\beta$). Therefore, the universal property of $\KK_\beta$ shows that there is a unique surjective $\rho\colon\C(\HH)\to\C(\KK_\beta)$ such that $\chi=\theta\comp\rho$.
\end{proof}

\begin{definition}
Let $\beta\in\Mor(\sM,\sM\tens\sB)$ be a quantum family of invertible maps preserving $\phi$. The compact quantum group $\KK_\beta$ constructed in theorem \ref{genbeta} will be called the compact quantum group \emph{generated} by the family $\beta$.
\end{definition}

\begin{remark}\label{Rem:genInvertible}
Let $\beta\in\Mor(\sM,\sM\tens\sB)$ be a quantum family of invertible maps preserving $\phi$.
\begin{enumerate}
\item The compact quantum group generated by $\beta$ can be interpreted as obtained by taking all possible compositions of members of the family $\beta$. Indeed, the maps $\{\Lambda_n\}_{n\in\NN}$ used in the proof of Theorem \ref{HopfImage} to construct the Hopf image (i.e.~$\KK_\beta$ in this case) are the unique elements of $\Mor\bigl(\C(\GG),\sB^{\tens{n}}\bigr)$ such that
\[
\underbrace{\beta\vt\dotsm\vt\beta}_n=(\id\tens\Lambda_n)\comp\alpha.
\]
Indeed,
\[
\begin{split}
\beta^{\vartriangle{n}}&=\bigl((\id\tens\Lambda)\comp\alpha\bigr)^{\vartriangle{n}}\\
&=(\id\tens\underbrace{\Lambda\tens\dotsm\tens\Lambda}_n)\comp\alpha^{\vartriangle{n}}\\
&=(\id\tens\underbrace{\Lambda\tens\dotsm\tens\Lambda}_n)\comp(\id\tens\Delta^{(n-1)})\comp\alpha\\
&=\bigl(\id\tens\bigl[(\underbrace{\Lambda\tens\dotsm\tens\Lambda}_n)\comp\Delta^{(n-1)}\bigr]\bigr)\comp\alpha.
\end{split}
\]
\item If the \cst-algebra $\sB$ is commutative then $\KK_\beta$ is a classical group. This follows from Remark \ref{RemHI}\eqref{RemHI1}.
\item If $\HH$ is a compact quantum group and $\sB=\C(\HH)$ while $\beta$ is an action of $\HH$ then $\KK_\beta=\HH$ and $\Bar{\beta}=\beta$ (cf.~Remark \ref{RemHI}\eqref{RemHI2}).
\item Assume that $\beta$ has the property that the set
\[
\bigl\{(\omega\tens\id)\beta(m)\st{m}\in\sM,\;\omega\in\sM^*\bigr\}
\]
generates a strictly dense subalgebra of $\M(\sB)$. Then $\theta$ has strictly dense range, because
\[
(\omega\tens\id)\beta(m)=(\omega\tens\id)(\id\tens\theta)\Bar{\beta}(m)=\theta\bigl((\omega\tens\id)\Bar{\beta}(m)\bigr)
\]
for all $m\in\sM$ and $\omega\in\sM^*$. This means that if the family $\beta$ contains each of its members only once then $\beta$ is a ``subfamily'' of the family $\Bar{\beta}$ (cf.~\cite[Theorem 1.1(5)]{dkss}).
\item If the family $\beta$ is \emph{trivial}, i.e.
\[
\beta(m)=m\tens\I_\sB\in\M(\sM\tens\sB),\qquad{m}\in\sM,
\]
then it is easy to see that $\KK_\beta$ is the trivial group: $\C(\KK_\beta)=\CC$ and the map $\theta$ is of course, the opposite of surjective --- its image is $\CC\I\subset\M(\sB)$. Moreover, if $\pi$ is the canonical map $\C(\GG)\to\C(\KK_\beta)$ such that $\beta=(\id\tens\pi)\comp\alpha$, then for all $x\in\C(\GG)$ we have $\pi(x)=\epsilon(x)$, where $\epsilon$ is the counit of $\GG$.
\item The construction above yields another view-point on the notion of \emph{inner linearity} introduced in \cite{BanicaBichon}. Recall that a Hopf algebra $A$ is inner-linear if it admits an algebra morphism $\pi$ into a matrix algebra such that $A=A_{\pi}$. Thus if $\GG$ is a compact quantum group then the algebra $\Pol(\GG)$ is inner linear if $\GG$ is generated be a finite quantum family of invertible maps of some finite quantum space described by a pair $(\sM,\phi)$.

\end{enumerate}
\end{remark}


\section{Quantum increasing sequences of Curran and related problems}

In the last short section we explain the connection between quantum increasing sequences of Curran and problems studied in this article.

The notion of quantum increasing sequences was introduced by S.\,Curran in \cite{Curran}. They can be viewed as a quantum family of (invertible) maps from the set $\{1,\ldots,k\}$ to $\{1, \ldots, n\}$, where $k, n \in \NN$, $k \leq n$. We now recall the main definitions in \cite{Curran}, changing slightly the notation so that it is compatible with the rest of our article.

\begin{definition}[\cite{Curran}, Definition 2.1]
Let $k, n \in \NN$, $k \leq n$. The algebra of continuous functions on the space of quantum increasing sequences of length $k$ with values in $\{1, \ldots, n\}$, $\C (\II_{k,n})$, is the universal unital \cst-algebra generated by the elements $\{v_{ij}:i =1, \ldots,n, j=1,\ldots,k\}$ such that
\begin{rlist}
\item   $v_{ij}$ is an orthogonal projection for each $i\in\{1, \ldots,n\}, j\in \{1,\ldots,k\}$;
\item $\sum_{i=1}^n v_{ij} =1$ for each $ j\in \{1,\ldots,k\}$;
\item $v_{ij} v_{i'j'}=0$ if $i,i'\in\{1, \ldots,n\}, j,j'\in \{1,\ldots,k\}$, $j < j'$, $i \geq i'$.
\end{rlist}
\end{definition}

The algebra $\C(\II_{k,n})$  in \cite{Curran} is denoted $A_i(k,n)$. It is easy to check that if $(e_1, \ldots, e_n)$ and $(f_1, \ldots, f_k)$ denote respectively the canonical bases in $\CC^n$ and $\CC^k$ the formula
\[ \alpha(e_i) = \sum_{j=1}^k f_j \otimes v_{ij},\;\;\; i=1, \ldots, n,\]
defines a quantum family of invertible maps $\{1,\ldots,k\}$ to $\{1, \ldots, n\}$ indexed by $\II_{k,n}$. Similarly one can verify that the commutative version of the algebra $\C(\II_{k,n})$ is the algebra of functions on the space increasing sequences of length $k$ with values in $\{1, \ldots, n\}$ and that this identification is compatible with the map $\alpha$ defined above. For us it is going to be important that one can `complete' a quantum increasing sequence of  of length $k$ with values in $\{1, \ldots, n\}$ to a quantum permutation in $S_n^+$. By that statement we understand the fact that there exists an injective morphism between quantum spaces  $\II_{k,n}$ and $S_n^+$, which manifests itself as a surjective \cst-algebra morphism $\C(S_n^+) \to \C(\II_{k,n})$.

\begin{proposition}[\cite{Curran}, Proposition 2.5] \label{completion}
Let $k, n \in \NN$, $k \leq n$, and let $(p_{ij}:i =1, \ldots,n, j=1,\ldots,n)$, $(v_{ij}:i =1, \ldots,n, j=1,\ldots,k)$ denote respectively the canonical generators of $\C(S_n^+)$ and $\C(\II_{k,n})$. Put in addition $v_{00}=1$, $v_{i0}=v_{0i}=v_{i,k+1}=0$ for $i=1,\ldots,n$. Then the map
\[ p_{ij} \mapsto v_{ij}, \;\;\;\;i\in\{1, \ldots,n\}, j\in \{1,\ldots,k\},\]
\[ p_{i,k+m} \mapsto 0, \;\;\;\; m\in \{1,\ldots,n-k\},i\in\{1, \ldots,m-1\} \cup \{m+k+1, \ldots,n\}, \]
\[ p_{p+m,k+m} \mapsto \sum_{i=0}^{m+p-1} (v_{ip} - v_{i+1, p+1}), \;\;\;\;m\in\{1, \ldots,n-k\}, p\in \{0,\ldots,k\},\]
extends uniquely to a surjective unital $*$-homomorphism $\gamma: \C(S_n^+) \to \C(\II_{k,n})$.
\end{proposition}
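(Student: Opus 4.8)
The plan is to verify directly that the proposed assignment respects all three families of defining relations of $\C(S_n^+)$, so that by the universal property of $\C(S_n^+)$ the map extends uniquely to a unital $*$-homomorphism $\gamma$; surjectivity is then immediate since the images of the $p_{ij}$ with $j\le k$ are exactly the generators $v_{ij}$ of $\C(\II_{k,n})$. Recall that $\C(S_n^+)$ is universal for a matrix $(p_{ij})_{i,j=1}^n$ of projections whose rows and columns each sum to $\I$ (a \emph{magic unitary}). Thus the work splits into checking, for the elements $\widetilde p_{ij}$ defined by the three displayed formulas in the statement, that (a) each $\widetilde p_{ij}$ is a self-adjoint idempotent, (b) each row sums to $\I$, and (c) each column sums to $\I$. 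The convention $v_{00}=\I$, $v_{i0}=v_{0i}=v_{i,k+1}=0$ is introduced precisely to make the telescoping formula in the third line uniform across the boundary cases, so I would keep these conventions active throughout.

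First I would record the consequences of relations (i)--(iii) of $\C(\II_{k,n})$ that the calculation relies on. From (i) and (ii), for each fixed column index $j\in\{1,\dots,k\}$ the $\{v_{ij}\}_i$ form a partition of unity into orthogonal projections, hence are mutually orthogonal. From (iii), $v_{ij}v_{i'j'}=0$ whenever $j<j'$ and $i\ge i'$. Taking adjoints (all $v_{ij}$ are self-adjoint) gives the symmetric statement $v_{i'j'}v_{ij}=0$ under the same index condition. These two facts are what make the telescoping sums $\sum_{i=0}^{m+p-1}(v_{ip}-v_{i+1,p+1})$ behave like differences of \emph{nested} projections: the point is that $q:=\sum_{i=0}^{s}v_{ip}$ and $q':=\sum_{i=0}^{s}v_{i+1,p+1}$ are each projections (sums of orthogonal projections), and the orthogonality relations from (iii) force $q'\le q$, so that $q-q'$ is again a projection. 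I would make this monotonicity precise as the central lemma of the argument.

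The main obstacle is precisely this telescoping/monotonicity computation for the generic entries $\widetilde p_{p+m,k+m}=\sum_{i=0}^{m+p-1}(v_{ip}-v_{i+1,p+1})$: one must show each is a projection, and then that the row and column sums collapse correctly. For the projection property I would write $\widetilde p_{p+m,k+m}=Q_p-Q_{p+1}'$ with $Q_p=\sum_{i=0}^{m+p-1}v_{ip}$ and $Q_{p+1}'=\sum_{i=1}^{m+p}v_{i,p+1}$, observe both are projections, and use relation (iii) to check $Q_{p+1}'Q_p=Q_{p+1}'=Q_pQ_{p+1}'$, which yields $(Q_p-Q_{p+1}')^2=Q_p-Q_{p+1}'$ and self-adjointness for free. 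Care is needed at the boundaries $p=0$ and $p=k$, where the conventions $v_{00}=\I$ and $v_{i,k+1}=0$ enter and trim the sums. For the column sums, fixing the column $k+m$ and summing over all row indices $i$ (the zero entries from the second formula contribute nothing), the cross terms telescope and the conventions at $i=0$ and at the top index leave exactly $\I$; the row sums are handled symmetrically, summing a fixed row over all columns $j$ and again using the telescoping structure together with the column-sum relation $\sum_i v_{ij}=\I$.

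I expect (a) and the boundary bookkeeping to be the genuinely delicate part, while (b) and (c) are then bookkeeping that follows once the nesting $Q_{p+1}'\le Q_p$ is in hand. Since all relations defining $\C(S_n^+)$ are verified, the universal property delivers the unique extension to a unital $*$-homomorphism $\gamma\colon\C(S_n^+)\to\C(\II_{k,n})$, and surjectivity holds because $\{v_{ij}\}$ already lie in the range. This realizes $\II_{k,n}$ as a quantum subspace of $S_n^+$, as asserted.
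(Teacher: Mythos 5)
Your overall strategy --- verify the magic-unitary relations for the proposed images and invoke the universal property of $\C(S_n^+)$ --- is the natural (and essentially the only) one; note that the paper itself does not reprove this proposition but cites Curran, and immediately after the statement it flags the one ingredient your sketch is missing: the proof ``uses also [Cur, Lemma 2.4(ii)], where it is shown that $v_{ij}=0$ unless $j\leq i \leq n-k+j$.'' Your handling of the projection property and of the \emph{column} sums is correct: with $Q_p=\sum_{i=0}^{m+p-1}v_{ip}$ and $Q'_{p+1}=\sum_{i=1}^{m+p}v_{i,p+1}$, relations (ii) and (iii) together give $Q_pQ'_{p+1}=Q'_{p+1}=Q'_{p+1}Q_p$ (you attribute the nesting to (iii) alone, but one also needs (ii) to write $v_{i',p+1}=\sum_{i<i'}v_{ip}v_{i',p+1}$), so each entry $Q_p-Q'_{p+1}$ is a projection, and the column sums telescope to $\I$ using only the boundary conventions.

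The genuine gap is in the row sums, which are emphatically \emph{not} ``handled symmetrically'': the defining relations of $\C(\II_{k,n})$ are not row/column symmetric, since (ii) is a column-sum condition and there is no row-sum relation for the $v_{ij}$ whatsoever. Carrying out your telescoping in row $i$ (take $i\leq\min(k,n-k)$ for concreteness, so that $p$ runs over $0,\dots,i-1$) leaves
\[
\sum_{j=1}^{k}v_{ij}+T_0-T_i-\sum_{j=1}^{i}v_{ij}
=\I+\sum_{j=i+1}^{k}v_{ij}-\sum_{l=0}^{i-1}v_{li},
\qquad\text{where } T_p=\sum_{l=0}^{i-1}v_{lp},
\]
and there is no formal cancellation between the two leftover sums: the row sum equals $\I$ precisely because $v_{ij}=0$ whenever $i<j$ (and, for rows with large index, because $v_{ij}=0$ whenever $i>n-k+j$). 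This is exactly Curran's Lemma 2.4(ii), the fact the paper singles out. It \emph{is} a consequence of relations (i)--(iii) --- for instance $v_{ij}=\bigl(\sum_{i'=1}^{n}v_{i',j-1}\bigr)v_{ij}=\sum_{i'<i}v_{i',j-1}v_{ij}$ by (iii), and induction on $j$ kills every term, while a downward induction from $j=k$ gives the other half --- but it requires this separate inductive argument and cannot be dismissed as bookkeeping following from the column-sum relation. As written, your row-sum verification fails; once you state and prove this vanishing lemma and insert it at that point, the rest of your argument goes through.
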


Note here that the proof of the above uses also \cite[ Lemma 2.4 (ii)]{Curran}, where it is shown that $v_{ij}=0$ unless $j\leq i \leq n-k+j$.

The above proposition naturally leads to considering the quantum family of invertible maps on $\CC^n$ defined by the map $\kappa: \CC^n \to \CC^n \otimes \C(\II_{k,n})$, where
$\kappa = (\textup{id} \otimes \gamma) \circ \beta$ and $\beta$ is the canonical action of $S_n^+$ on $\CC^n$. Further Theorem \ref{genbeta} allows us to consider the quantum group $\KK_{k,n}$ (a quantum subgroup of $S_n^+$) generated by the family $\beta$. We will call it the quantum permutation group of $n$ elements generated by quantum increasing sequences of length $k$.

\begin{question} \label{questgen}
For what values of $k$ and $n$ quantum increasing sequences of length $k$ generate all quantum permutations in $S_n^+$? In other words, when $\KK_{k,n} = S_n^+$?
\end{question}

The answer to this question is sometimes negative for elementary reasons: for example when $k\in \{n-1,n\}$ then $\C(\II_{k,n})$ is in fact commutative and thus if $n>3$ we cannot have $\KK_{k,n} = S_n^+$. Further $\C(\II_{1,n})$ is also commutative for any $n \in \NN$, so in fact $\KK_{1,n} = S_n \subset S_n^+$. Indeed, by Remark \ref{Rem:genInvertible} $\KK_{1,n}$ is a classical subgroup of $S_n^+$, so also of $S_n$ -- and it is easy to see that the `completion' procedure formalised in Proposition \ref{completion} in this case realizes elements of $I_{1,n}$ as all cycles $(1, \ldots,l)$ with $l=1,\ldots,n$. The latter  generate $S_n$ as a group, which can be seen by simple induction.

On the other hand already when $k=2$, $n=4$ the question seems to be interesting and non-trivial. Note that the classical version of this question has a positive answer (in fact for an arbitrary $n\in \NN$, as long as $k=2$). By that we mean the fact the `completed' permutations arising from increasing sequences of length $2$ with values in $\{1, 2,3,4\}$ -- i.e.\ $\id$, the transposition $\tau_{2,3}$, the cycle $(2,4,3)$, the cycle $(1,2,3)$, the composition $\tau_{2,3} \tau_{1,3} \tau_{2,4}$ and the composition $\tau_{1,3} \tau_{2,4}$ -- generate $S_4$ as a group.

Consider then $k=2, n=4$.

\begin{proposition} \label{structure}
The \cst-algebra $\C(\II_{2,4})$ is isomorphic to $(\CC^2 \star \CC^2) \oplus \CC^2$.
\end{proposition}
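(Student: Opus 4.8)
The plan is to analyze the universal \cst-algebra $\C(\II_{2,4})$ directly from its defining relations, reducing the infinite list of generators to a small set of projections constrained by the orthogonality conditions, and then recognize the resulting universal object as a direct sum of a free product and a commutative piece. First I would write out the generators $\{v_{ij}: i=1,\dots,4,\ j=1,2\}$ and apply \cite[Lemma 2.4(ii)]{Curran} (quoted in the excerpt), which forces $v_{ij}=0$ unless $j\le i\le n-k+j = i\le 2+j$. For $j=1$ this leaves only $v_{11},v_{21},v_{31}$ nonzero, and for $j=2$ only $v_{22},v_{32},v_{42}$ nonzero; the remaining generators vanish. Thus $\C(\II_{2,4})$ is generated by these six projections, subject to the column-sum relations $v_{11}+v_{21}+v_{31}=\I$ and $v_{22}+v_{32}+v_{42}=\I$, together with the cross-column orthogonality $v_{ij}v_{i'j'}=0$ for $j<j'$, $i\ge i'$ (here $j=1$, $j'=2$).

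Next I would unwind precisely which cross-orthogonality relations survive after the vanishing. With $j=1,j'=2$ the condition $i\ge i'$ gives $v_{i1}v_{i'2}=0$ whenever $i\ge i'$, ranging over $i\in\{1,2,3\}$, $i'\in\{2,3,4\}$. Listing these: $v_{21}v_{22}=0$, $v_{31}v_{22}=0$, $v_{31}v_{32}=0$ (and any with $i=1$ are automatically absent since $i'\ge 2>1$). I expect this to reveal that certain generators in one column are forced to be orthogonal to certain generators in the other, so that the algebra splits along a central projection. Concretely I would look for a central projection $z$ (built from sums of the $v_{ij}$) whose ``corner'' $z\,\C(\II_{2,4})\,z$ carries the commutative $\CC^2$ summand while the complementary corner is generated by two \emph{free} projections, giving $\CC^2\star\CC^2$. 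The universal \cst-algebra generated by two projections with no further relations is well known to be $\CC^2\star\CC^2$, so the main work is identifying the correct central splitting.

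The step I expect to be the main obstacle is the bookkeeping that converts the raw relation list into the clean central decomposition: one must verify that the candidate projection is genuinely central (commutes with all six generators, using the orthogonality relations), compute the two corners explicitly, and check that in the noncommutative corner exactly two projections remain genuinely free (no surviving relation constrains them) while in the other corner the relations force commutativity, yielding $\CC^2$. Once the central projection $z$ is correctly pinned down, the identification $z\,\C(\II_{2,4})\cong \CC^2\star\CC^2$ and $(\I-z)\,\C(\II_{2,4})\cong\CC^2$ should follow from the universal properties of free products and commutative \cst-algebras, giving $\C(\II_{2,4})\cong(\CC^2\star\CC^2)\oplus\CC^2$. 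I would close by remarking that this is a finite and purely combinatorial check, so the isomorphism can be exhibited explicitly on generators.
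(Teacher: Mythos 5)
Your proposal is correct and takes essentially the same route as the paper: with the identification $p_1=v_{21}$, $p_2=v_{31}$, $q_1=v_{22}$, $q_2=v_{32}$, the paper observes precisely that $v_{31}$ and $v_{22}$ commute with everything and that killing them leaves the free pair $v_{21},v_{32}$, giving $(\CC^2\star\CC^2)\oplus\CC^2$. The central projection you are looking for is $z=v_{22}+v_{31}$, and its centrality is immediate from the relations you listed, since each of $v_{22}$, $v_{31}$ is orthogonal to every other generator except the remaining projection in its own column ($v_{11}$, resp.\ $v_{42}$), which dominates it.
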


\begin{proof}
The algebra $\C(\II_{2,4})$ is generated by two pairs of projections orthogonal to each other in each pair separately, say $p_1, p_2$ and $q_1, q_2$ such that we also have $p_1 q_1 =0$, $p_2 q_2 =0$ and $q_1 p_2=0$ (see the identification below); note that $q_1$ and $p_2$ commute with everything. Once $q_1$ and $p_2$ are set to $0$, the remaining two projections, $q_2$ and $p_1$ are free, and it is well-known that the algebra generated by two free projections is isomorphic to $\CC^2 \star \CC^2$ (or alternatively to $\cst(D_{\infty})$).
\end{proof}

Let us continue with more specific descriptions,  writing $v_{21}=p_1$, $ v_{31}=p_2$, $v_{22}=q_1$, $v_{32}=q_2$. Then the map $\gamma:\C(S_4^+) \to \C(\II_{2,4})$ from Proposition \ref{completion} is induced by the following magic unitary:

\[ \begin{bmatrix} 1 - p_1 - p_2 & 0 & p_1 +p_2 & 0 \\
p_1 & q_1 & 1 - p_1 - p_2 - q_1 & p_2 \\
p_2 & q_2 & q_1 & 1 - p_2  -q_1 - q_2  \\
0 & 1- q_1 - q_2 & 0 & q_1 +q_2
\end{bmatrix}
\]

This suggests that $\KK_{2,4}$ is a `minor augmentation of' $\widehat{D_{\infty}} \subset S_4^+$.

We believe that the most natural path to answer Question \ref{questgen} would lead via  \cite[Corollary 3.7]{Simeng}, quoted already at the end of Section 4, together with the Weingarten type formulas in \cite{Teosurvey} and in \cite[Proposition 4.4]{Curran}.

\section*{Appendix: Existence of universal quantum families}

\renewcommand{\thesection}{A}
\setcounter{proposition}{0}
\setcounter{equation}{0}

In this appendix we will give a proof of Theorem \ref{existenceQFM} which does not involve considering arbitrary families of relations. We begin with a version of Theorem \ref{existenceQFM} for $\C(\XX)=\cst(\FF_n)$ --- the full group \cst-algebra of the free group on $n$ generators. To lighten notation we will write $\sM$ for the finite dimensional \cst-algebra $\C(\YY)$.

\begin{proposition}\label{firststep}
Let $n \in \NN$ and let $\sM$ be a finite dimensional \cst-algebra. Then there exists a unital \cst-algebra $\sA_n$ and a quantum family of maps
\[
\Phi_n\colon\cst(\FF_n)\longrightarrow\sM\tens\sA_n
\]
such that for any \cst-algebra $\sD$ and any $\Psi\in\Mor\bigl(\cst(\FF_n),\sM\tens\sD\bigr)$ there exists a unique $\Lambda_n\in\Mor(\sA_n,\sD)$ such that
\[
\Psi=(\id\tens\Lambda_n)\comp\Phi_n.
\]
\end{proposition}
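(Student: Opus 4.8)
The plan is to construct $\sA_n$ and $\Phi_n$ explicitly as a universal \cst-algebra generated by matrix entries, then verify the universal property directly. Let me set up notation. Since $\C(\XX)=\cst(\FF_n)$ is generated by unitaries $U_1,\dotsc,U_n$ (the canonical generators of $\FF_n$) subject only to unitarity, a morphism $\Psi\in\Mor\bigl(\cst(\FF_n),\sM\tens\sD\bigr)$ is \emph{completely determined} by the $n$ unitaries $\Psi(U_1),\dotsc,\Psi(U_n)\in\M(\sM\tens\sD)=\sM\tens\M(\sD)$ (the equality because $\sM$ is finite dimensional). Fixing a linear basis $\{e_1,\dotsc,e_d\}$ of $\sM$, each such unitary $\Psi(U_s)$ can be expanded as $\sum_{i=1}^d e_i\tens d^{(s)}_i$ for uniquely determined multipliers $d^{(s)}_i\in\M(\sD)$. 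The key observation is that the only constraints on the family $\{d^{(s)}_i\}$ are the $*$-algebraic relations expressing that each $\sum_i e_i\tens d^{(s)}_i$ is a unitary in $\sM\tens\M(\sD)$; these unitarity relations translate, via the multiplication and $*$-structure constants of $\sM$, into finitely many polynomial $*$-relations among the elements $d^{(s)}_i$.

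First I would define $\sA_n$ to be the universal unital \cst-algebra generated by abstract self-adjoint/non-self-adjoint symbols $\{a^{(s)}_i : s=1,\dotsc,n,\ i=1,\dotsc,d\}$ subject precisely to the relations that force each formal element $u_s:=\sum_{i=1}^d e_i\tens a^{(s)}_i\in\sM\tens\sA_n$ to be unitary, i.e.\ $u_s u_s^*=u_s^* u_s=\I_\sM\tens\I_{\sA_n}$. Because $\sM$ is finite dimensional, unitarity of $u_s$ is equivalent to finitely many \cst-relations in the $a^{(s)}_i$ (after expanding products using the structure constants $m^p_{k,l}$ and the conjugation matrix $\cT$), and these relations are bounded — each generator has norm at most the norm of the corresponding matrix coefficient functional applied to a unitary, hence $\le 1$ up to a fixed constant — so the universal \cst-algebra exists and is a quotient of some $\cst(\FF_N)$, making it a well-defined finitely generated unital \cst-algebra. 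I then define $\Phi_n\colon\cst(\FF_n)\to\sM\tens\sA_n$ by sending $U_s\mapsto u_s$; by universality of $\cst(\FF_n)$ this extends to a unital $*$-homomorphism, and it lands in $\sM\tens\sA_n$ rather than merely the multiplier algebra, so $\Phi_n$ is a genuine morphism.

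Next I would verify the universal property. Given any $\Psi\in\Mor\bigl(\cst(\FF_n),\sM\tens\sD\bigr)$, extract its coefficients $d^{(s)}_i\in\M(\sD)$ as above. Since each $\Psi(U_s)$ is unitary in $\sM\tens\M(\sD)=\M(\sM\tens\sD)$, the elements $d^{(s)}_i$ satisfy exactly the defining relations of $\sA_n$, so by universality there is a unique unital $*$-homomorphism $\Lambda_n\colon\sA_n\to\M(\sD)$ with $\Lambda_n(a^{(s)}_i)=d^{(s)}_i$. One checks $(\id\tens\Lambda_n)\comp\Phi_n=\Psi$ by evaluating both sides on the generators $U_s$ (they agree by construction) and invoking that $U_s$ generate $\cst(\FF_n)$ as a \cst-algebra. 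Uniqueness of $\Lambda_n$ follows because the $a^{(s)}_i$ generate $\sA_n$ and their images are forced. The remaining point — that $\Lambda_n$ is a \emph{morphism}, i.e.\ $\Lambda_n(\sA_n)\sD$ is dense in $\sD$ — needs the density/Podle\'s-type hypothesis on $\Psi$ to be propagated down; here I would argue that density of $\Psi(\cst(\FF_n))\sD$ in $\sM\tens\sD$ forces the coefficients $d^{(s)}_i$ together with $\sD$ to generate $\sD$ appropriately, hence $\Lambda_n$ is non-degenerate.

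The main obstacle I anticipate is \emph{verifying that the relations defining $\sA_n$ are admissible as \cst-relations}, i.e.\ that the universal object exists with the generators having a uniform norm bound. The cleanest route is to observe that in any representation each $u_s$ is unitary, so $\|u_s\|=1$, and since the $e_i$ form a fixed finite basis, each coefficient $a^{(s)}_i=(\omega_i\tens\id)(u_s)$ for a suitable functional $\omega_i\in\sM^*$, giving $\|a^{(s)}_i\|\le\|\omega_i\|$. This uniform bound is what guarantees the universal \cst-algebra is a quotient of a free-group \cst-algebra and hence finitely generated in the sense fixed in the paper, yielding the finite-generation conclusion of Theorem \ref{existenceQFM} for this base case; the general finite-dimensional $\C(\XX)$ would then be handled by realising it as a quotient of some $\cst(\FF_n)$ and pulling back.
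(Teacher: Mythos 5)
Your proposal is correct and is essentially the paper's own proof: the paper likewise defines $\sA_n$ as the universal \cst-algebra generated by the coefficients of $n$ unitaries in $\sM\tens\sA_n$ (it merely uses the matrix units of $\sM\cong\bigoplus_{i=1}^N M_{m_i}(\CC)$ as the basis, so the relations become literal unitarity of $nN$ matrices $u^{i,p}$ over $\sA_n$ and admissibility is evident), defines $\Phi_n$ on the free generators, and produces $\Lambda_n$ by extracting the coefficients of the unitaries $\Psi(U_s)\in\sM\tens\M(\sD)$. One remark: your concern about nondegeneracy of $\Lambda_n$ is an unnecessary detour --- since $\Lambda_n$ is a unital $*$-homomorphism into $\M(\sD)$, one has $\Lambda_n(\sA_n)\sD\supseteq\Lambda_n(\I)\sD=\sD$, which is exactly how the paper settles this point, with no appeal to any density property of $\Psi$.
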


\begin{proof}
The finite dimensional \cst-algebra $\sM$ is of the form
\begin{equation}\label{decomp}
\sM\cong\bigoplus_{i=1}^NM_{m_i}(\CC).
\end{equation}
Let $\sA_n$ be the \cst-algebra generated by elements
\[
\bigl\{u^{i,p}_{k,l}\st{1\leq{i}\leq{N}},\;{1\leq{k,l}\leq{m_i},\;{1\leq{p}\leq{n}}}\bigr\}
\]
subject to the following relations: for each $p\in\{1,\dotsc,n\}$, $i\in\{1,\dotsc,N\}$ and any $k,l\in\{1,\dotsc,m_i\}$ we have
\[
\sum_{r=1}^{m_i}(u^{i,p}_{r,k})^*u^{i,p}_{r,l}=\delta_{k,l}\I=\sum_{r=1}^{m_i}u^{i,p}_{k,r}(u^{i,p}_{l,r})^*.
\]
In other words we are asking that for each $p$ and each $i$ the $m_i\times{m_i}$ matrix
\[
u^{i,p}=\begin{bmatrix}u^{i,p}_{1,1}&\dotsm&u^{i,p}_{1,m_i}\\
\vdots&\ddots&\vdots\\u^{i,p}_{m_i,1}&\dotsm&u^{i,p}_{m_i,m_i}\end{bmatrix}
\]
be unitary. Then we can define a unital $*$-homomorphism $\Phi_n\colon\cst(\FF_n)\to\sM\tens\sA_n$ by
\[
\Phi_n(v_p)=(u^{1,p},\dotsc,u^{N,p})\in\bigoplus_{i=1}^NM_{m_i}(\sA_n)\cong\sM\tens\sA_n
\]
where $v_1,\dotsc,v_n$ are the generators of $\FF_n$ ($\Phi_n$ exists by the universal property of $\cst(\FF_n)$).

Note now  that if $\widetilde{\sD}$ is a unital \cst-algebra containing elements
\[
\bigl\{w^{i,p}_{k,l}\st{1\leq{i}\leq{N}},\;{1\leq{k,l}\leq{m_i},\;{1\leq{p}\leq{n}}}\bigr\}
\]
such that the matrices
\[
w^{i,p}=\begin{bmatrix}w^{i,p}_{1,1}&\dotsm&w^{i,p}_{1,m_i}\\
\vdots&\ddots&\vdots\\w^{i,p}_{m_i,1}&\dotsm&w^{i,p}_{m_i,m_i}\end{bmatrix}
\]
are unitary for each $p$ and $i$, then there exists a unique unital $*$-homomorphism $\sA_n\to\widetilde{\sD}$ mapping $u^{i,p}_{k,l}$ to $w^{i,p}_{k,l}$ for all $i,p,k,l$.

Assume now that $\Psi\in\Mor\bigl(\cst(\FF_n),\sM\tens\sD\bigr)$ for some \cst-algebra $\sD$. Then the images of $v_1,\dotsc,v_n$ in $\M\bigl(\sM\tens\sD)\cong\sM\tens\M(\sD)$ under $\Psi$ are precisely collections $(w^{1,p},\dotsc,w^{N,p})$ of unitary $m_i\times{m_i}$ matrices with entries in $\M(\sD)$. It follows that there exists a unique unital $*$-homomorphism $\Lambda_n\colon\sA_n\to\M(\sD)$ with the property that $\Psi(x)=(\id\tens\Lambda_n)\Phi_n(x)$ for all $x\in\cst(\FF_n)$. Clearly $\Lambda_n$ is nondegenerate (it is, after all, a unital map).
\end{proof}

\begin{proof}[Proof of Theorem \ref{existenceQFM}]
Let us lighten notation by writing $\sM$ for the finite dimensional \cst-algebra $\C(\XX)$ and $\sC$ for the finitely generated unital \cst-algebra $\C(\YY)$. This means that we have a surjective $*$-homomorphims $\pi\colon\cst(\FF_n)\to\sC$. In what follows we will use the notation of Proposition \ref{firststep}. In particular the decomposition of $\sM$ into simple summands will be given by \eqref{decomp}.

Let $\sK$ denote the kernel of $\pi$ and define $\sJ$ as the ideal in $\sA_n$ generated by the set
\[
\bigl\{(\omega\tens\id)\Phi_n(x)\st{x}\in\sK,\;\omega\in\sM^*\bigr\}
\]
and let $\sA$ be the quotient \cst-algebra $\sA_n/\sJ$. Also let $\lambda$ be the quotient map $\sA_n\to\sA$. In particular we see that $\sA$ is a unital finitely generated \cst-algebra. In the notation from the statement of Theorem \ref{existenceQFM} $\sA$ will be $\C(\EE)$.

Now we shall show that there exists a unital $*$-homomorphism $\Phi\colon\sC\to\sM\tens\sA$ such that the diagram
\begin{equation}\label{bPhi}
\xymatrix{
{\cst}(\FF_n)\ar[rr]^-{\Phi_n}\ar[d]_{\pi}&&\sM\tens{\sA_n}\ar[d]^{\id\tens\lambda}\\
\sC\ar[rr]^-{\Phi}&&\sM\tens\sA}
\end{equation}
is commutative. To see this let us first note that if $x\in\cst(\FF_n)$ is such that $\pi(x)=0$, then writing $\Phi_n(x)$ as
\[
\Phi_n(x)=\sum_{j=1}^me_j\tens{x_j}
\]
with $(e_j)_{j=1,\dotsc,m}$ a basis of $\sM$ (so $m=\sum\limits_{i=1}^Nm_i^2$) we obtain $x_1,\dotsc,x_m\in\sJ$ because if $(\omega_j)_{j=1,\dotsc,m}$ is the dual basis to $(e_j)_{j=1,\dotsc,M}$ then $x_j=(\omega_j\tens\id)\Phi_n(x)$ and $x\in\ker\pi=\sK$. It follows that $\lambda(x_j)=0$ for all $j$ and consequently $(\id\tens\lambda)\Phi_n(x)=0$. This means that for any $c\in\sC$ the element $(\id\tens\lambda)\Phi_n(x)$, where $x$ is any lift of $c$ is independent of the choice of the lift. We denote this element of $\sM\tens\sA$ by $\Phi(c)$. It is very easy to check that so defined a map $\Phi\colon\sC\to\sM\tens\sA$ is a unital $*$-homomorphism making the diagram \eqref{bPhi} commutative.

We will now establish the universal property of $(\sA,\Phi)$, i.e.~we will show that for any \cst-algebra $\sD$ and any $\Psi\in\Mor(\sC,\sM\tens\sD)$ there exists a unique $\Lambda\in\Mor(\sA,\sD)$ such that $\Psi=(\id\tens\Lambda)\comp\Phi$.

Indeed,  let $\Psi\in\Mor(\sC,\sM\tens\sD)$ for some \cst-algebra $\sD$. By the universal property of $(\sA_n,\Phi_n)$ proved in Proposition \ref{firststep} there exists a unique $\Lambda_n\in\Mor(\sA_n,\sD)$ such that the diagram
\[
\xymatrix{
{\cst}(\FF_n)\ar[rr]^-{\Phi_n}\ar[d]_{\pi}&&
\sM\tens\sA\ar[d]^{\id\tens\Lambda_n}\\
\sC\ar[rr]^-{\Psi}&&\sM\tens\sD}
\]
commutes (note that this is obtained by applying the universal property of $(\sA_n,\Phi_n)$ to the map $\Psi\comp\pi\in\Mor\bigl(\cst(\FF_n),\sM\tens\sD)$).

Take $x\in\sK$. Since $(\id\tens\Lambda_n)\Phi_n(x)=\Psi\bigl(\pi(x)\bigr)=0$, we have
\[
\Lambda_n\bigl((\omega\tens\id)\Phi_n(x)\bigr)
=(\omega\tens\id)\bigl((\id\tens\Lambda_n)\Phi_n(x)\bigr)=0
\]
for any $\omega\in\sM^*$. It follows from the definition of $\sJ=\ker\lambda$, that $\Lambda_n$ vanishes on $\sJ=\ker\lambda$. Therefore there exists a unique unital $*$-homomorphism $\Lambda\colon\sA\to\M(\sD)$ such that
\begin{equation}\label{LaLa}
\Lambda_n=\Lambda\comp\lambda.
\end{equation}
Clearly the map $\Lambda$ is a morphism from $\sA$ to $\sD$.

From \eqref{LaLa} we obtain the commutative diagram
\[
\xymatrix
{
\sM\tens\sA_n\ar[rr]^-{\id\tens\Lambda_n}\ar[rd]_-{\id\tens\lambda}&&\sM\tens\sD\\
&\sM\tens\sA\ar[ur]_-{\id\tens\Lambda}
}
\]
Now combining this information with \eqref{bPhi} we see that the diagram
\begin{equation}\label{PP}
\xymatrix{{\cst(\FF_n)}\ar[rr]^-{\Phi_n}\ar[dd]_{\pi}&&\sM\tens\sA_n\ar[dl]_{\id\tens\lambda}
\ar[dd]^{\id\tens\Lambda_n}\\
&\sM\tens\sA\ar[rd]_-{\id\tens\Lambda}\\
\sC\ar[rr]_-{\Psi}\ar[ur]^-{\Phi}&&\sM\tens\sD}
\end{equation}
commutes. The diagram
\begin{equation}\label{commut}
\xymatrix{\sC\ar[rr]^-{\Phi}\ar@{=}[d]&&\sM\tens\sA\ar[d]^{\id\tens\Lambda}\\\sC\ar[rr]^-{\Psi}&&\sM\tens\sD}
\end{equation}
is simply a part of \eqref{PP}.

Note that the \cst-algebra $\sA$ is generated by the set
\[
\bigl\{(\omega\tens\id)\Phi(c)\st{c}\in\sC,\;\omega\in\sM^*\bigr\}.
\]
This implies that $\Lambda$ is the unique morphism from $\sA$ to $\sD$ making \eqref{commut} commutative.
\end{proof}

\end{document}